\newtheorem{theorem}{Theorem}[section]
\newtheorem{lemma}{Lemma}[section]
\newtheorem{proposition}{Proposition}[section]
\newtheorem{remark}{Remark}[section]
\newtheorem{corollary}{Corollary}[section]
\DeclareMathOperator{\curl}{curl}
\DeclareMathOperator{\flux}{flux}
\newcommand{\hodge}{\!\star\!}
\newcommand{\iprod}{\mathbin{\lrcorner}}
\title{Spectral geometry of the curl operator on smoothly bounded domains}
\author{Josef E. Greilhuber}
\author{Willi Kepplinger}
\date{May 2025}
\begin{document}
\begin{abstract}
    We show that the spectrum of the curl operator on a generic smoothly bounded domain in three-dimensional Euclidean space consists of simple eigenvalues. The main new ingredient in our proof is a formula for the variation of curl eigenvalues under a perturbation of the domain, reminiscent of Hadamard's formula for the variation of Laplace eigenvalues under Dirichlet boundary conditions. As another application of this variational formula, we simplify the derivation of a well-known necessary condition for a domain to minimize the first curl eigenvalue functional among domains of a given volume and derive similar necessary conditions for a domain extremizing higher eigenvalue functionals.
\end{abstract}
\maketitle
\section{Introduction}

This paper is concerned with the spectrum of the operator 
\begin{align*}
    \curl u = \nabla \times u,
\end{align*}
which acts on vector fields $u$ defined on smoothly bounded domains in $\mathbb R^3$, i.e., bounded domains $D \subseteq \mathbb R^3$ with the property that for each point $p \in \partial D$ there exists an open neighborhood $U \subseteq \mathbb R^3$ of $p$ and a function $\rho \in C^\infty(U)$ with $U \cap D = \rho^{-1}((-\infty,0))$ and $d\rho(p) \neq 0$. Under appropriately chosen boundary conditions, the curl operator is self-adjoint with compact resolvent, so its spectrum consists of a discrete set of real eigenvalues of finite multiplicity.

The corresponding eigenfields describe magnetic fields in plasmas in which no Lorentz force acts on the charged particles constituting the plasma and are thus often referred to as ``force-free magnetic fields'' in the plasma physics literature. They can be shown to be energy-minimal magnetic fields (within a fixed helicity class of magnetic fields) and have been proposed to be the natural end configurations of dissipative magnetic fluids in which the magnetic forces dominate (\cite{Woltjer1958},\cite{LaurenceAvellaneda1991}).\par

More generally, curl eigenfields lie at the intersection of a number of mathematical subdisciplines. On the one hand, they are a particularly rich example class of solutions to the Euler equations, and, on the other hand, their orthogonal complements define contact structures \cite[Theorem 2.1]{Etnyre2000I}. This confluence is interesting from both an analytic as well as a contact geometric and topological point of view and has led to some remarkable flexibility results for the study of the Euler equations (e.g. \cite{Etnyre2000I},\cite{Etnyre2000III},\cite{Cardona2021}) and rigidity results in contact topology (e.g. \cite{Etnyre2012}).\par 

While there do exist studies of the spectral theory of the curl operator on domains in specific settings (e.g. \cite{CantarellaDeTurckGluckTeytel2000B},\cite{EncisoPeralta2020}), there have been few systematic treatments of the curl operator as a self-adjoint operator on domains.\par

\subsection{The setup}
 By imposing boundary conditions on the curl operator, we may obtain a Fredholm operator or an unbounded self-adjoint operator. One especially natural choice, introduced by Giga and Yoshida in \cite{GigaYoshida1990}, can be described as follows: Let $D\subset \mathbb R^3$ be a smoothly bounded domain and $\vec \nu$ its outward pointing unit normal. Define 
\begin{align*}
    L^2_\partial(D) = \left\{\vec u \in L^2(D,\mathbb C^3): \nabla \cdot \vec u = 0, \vec u \cdot \vec \nu = 0 \text{ on } \partial D\right\},
\end{align*}
the space of boundary-parallel, divergence free, and square integrable vector fields, where the divergence is defined in the weak sense. (It adds no difficulty and is helpful in the spectral theory to work over $\mathbb C$, which will be the convention for the rest of this paper.) Let
\begin{align*}
    H^1_{\partial\partial}(D) = \left\{\vec u \in L^2_\partial(D): \curl \vec u \in L^2_\partial(D)\right\},
\end{align*}
where $\curl$ is understood in the sense of distributions. It is shown in \cite{GigaYoshida1990} that $\curl$ is a Fredholm operator from $H^1_{\partial\partial}(D)$ to $L^2_\partial(D)$, whose spectrum is however all of $\mathbb C$ unless $\partial D$ has trivial first homology, i.e., $\partial D$ consists of a union of spheres. A more meaningful spectrum is obtained by introducing the spaces
\begin{align*}
    L^2_\Sigma(D) &= \left\{\vec u \in L^2_\partial(D): \textstyle\int_S \vec u \cdot \vec \nu_S = 0 \right\}, \\
    H^1_{\Sigma\Sigma}(D) &= \left\{\vec u \in L^2_\Sigma: \curl \vec u \in L^2_\Sigma(D)\right\}.
\end{align*}
where $S$ ranges over all compact embedded orientable surfaces in $D$ with $\partial S \subset \partial D$, and, given $S$, $\vec\nu_S$ is a unit normal vector field of $S$.
The main result of \cite{GigaYoshida1990} is that $\curl$ is an unbounded self-adjoint operator with compact resolvent on $L^2_\Sigma(D)$ with domain $H^1_{\Sigma\Sigma}(D)$.

There are other ways to restrict the space $H^1_{\partial\partial}(D)$ in order to obtain self-adjoint operators with compact resolvent. It follows from the work of Hiptmair, Kotiuga, and Tordeux \cite{HiptmairKotiugaTordeux2012} that these are indexed by Lagrangian subspaces of the first cohomology of the domain's boundary, denoted by $H^1_{dR}(\partial D,\mathbb C)$. The symplectic structure on $H^1_{dR}(\partial D, \mathbb C)$ is induced by the wedge product on $1$-forms, extended as a sesquilinear, antisymmetric form from $H^1_{dR}(\partial D,\mathbb R)$. We will discuss these operators, from which Giga--Yoshida's curl operator arises as a special case, in detail in \Cref{subsec:self-adjoint-realizations}. Given a Lagrangian subspace $L\subset H^1_{dR}(\partial D,\mathbb C)$ we will denote the associated curl operator by $\curl_L$. We wish to point out that Lagrangian boundary conditions encompass not just Giga--Yoshida's zero flux boundary conditions but also the ``Amperian'' boundary conditions proposed by Cantarella \cite{Cantarella1999}.

\subsection{Statement of results}
In this paper, we answer the question whether the spectrum of the curl operator with Lagrangian boundary conditions is generically simple. To state our result precisely, we formalize the ``space of domains'' as follows.

Let $D_0 \subseteq \mathbb R^3$ be a smoothly bounded domain. We consider the space $\mathcal X(D_0)$ of smooth embeddings of $\bar D_0$ into $\mathbb R^3$. As an open subset of the space of smooth maps from $\bar D_0$ to $\mathbb R^3$ this space naturally inherits the structure of a smooth Frech\'et manifold. Given a diffeomorphism $\Phi \in \mathcal X(D_0)$, we write $D_\Phi$ for $\Phi(D_0)$. Given a Lagrangian $L \subseteq H^1_{dR}(\partial D_0,\mathbb C)$, we write $L_\Phi$ for the image of $L$ under $(\Phi^{-1})^\ast: H^1_{dR}(\partial D_0,\mathbb C) \to H^1_{dR}(\partial D_{\Phi},\mathbb C)$. In the following theorem, we additionally require the Lagrangian $L \subseteq H^1_{dR}(\partial D_0,\mathbb C)$ to be \emph{real}, i.e.\ invariant under complex conjugation, which just means that $L$ arises from a Lagrangian in $H^1_{dR}(\partial D,\mathbb R)$ by complexification. This ensures that there exists a basis of real-valued eigenforms for $\curl_L$, an important element in our proof.

\begin{theorem}
    \label{thm:simple_eigenvalues}
    Let $D_0 \subseteq \mathbb R^3$ be a smoothly bounded domain and let $L\subset H^1_{dR}(\partial D_0,\mathbb R)$ be a Lagrangian subspace. For a comeagre subset of diffeomorphisms $\Phi \in \mathcal X(D_0)$ the spectrum of the operator $\curl_{L_\Phi}$ on $D_\Phi$ consists of simple eigenvalues.
\end{theorem}

Note that $L_\Phi$ is locally constant on the set of diffeomorphisms $\Phi \in \mathcal X(D_0)$ with a given image $\bar D \subseteq \mathbb R^3$, since any two isotopic diffeomorphisms $\Phi_1: \bar D_0 \to \bar D$ and $\Phi_2: \bar D_0 \to \bar D$ induce the same map on cohomology. Thus, the operator $\curl_{L_\Phi}$ and hence also its spectrum locally depend on $D$, but not on the choice of embedding $\Phi: \bar D_0 \to \bar D$. Therefore, \Cref{thm:simple_eigenvalues} really is a statement on generic \emph{domains} as opposed to generic embeddings $\Phi: \bar D_0 \to \mathbb R^3$.

We would like to point out that there have been no partial results in this direction, that is, this result has not been known for any boundary conditions or any domains.\par
The main ingredient in \Cref{thm:simple_eigenvalues} is a formula reminiscent of Hadamard's famous rule for the variation of eigenvalues of the Laplace operator with Dirichlet boundary conditions. If we deform the domain $D$ along the flow of a vector field $\vec X$, the first variation of an eigenvalue $\lambda$ of the $\curl_L$ operator is given by
\begin{align}
    \label{eq:hadamard_rule_introduction}
    \dot \lambda = - \lambda \int_{\partial D} (\vec X \cdot \vec \nu)\,|u|^2.
\end{align}
where $u$ is an eigenfield of $\lambda$. See \Cref{thm:hadamard_rule} for a precise statement.

The proof of Theorem \ref{thm:simple_eigenvalues} is informed by methods the authors have previously used in \cite{GreilhuberKepplinger2023} for spectral genericity results. It rests on the idea that sufficiently large families of operators with compact resolvent which do not satisfy generic spectral simplicity have eigenfunctions that satisfy strong pointwise constraints derived with the help of the Hadamard rule (\ref{eq:hadamard_rule_introduction}). These constraints are then shown to lead to a contradiction, thus establishing Theorem \ref{thm:simple_eigenvalues}.

To the best of our knowledge, formula \eqref{eq:hadamard_rule_introduction} is new even for the curl operator on domains diffeomorphic to the solid torus, where shape optimization problems for certain choices of Lagrangian boundary conditions have been considered previously in \cite{CantarellaDeTurckGluckTeytelA} and \cite{EncisoPeralta2020}, using variational formulas for the Rayleigh quotient outwardly similar to \eqref{eq:hadamard_rule_introduction}. Remarkably, the Hadamard rule for the curl operator bears a strong similarity to the corresponding formula, recently derived by Lamberti and Zaccharon \cite{LambertiZaccaron}, for the Maxwell system with various boundary conditions (see also Lamberti--Pauli--Zaccharon \cite{LambertiPauliZaccaron}). (Note that the boundary conditions of the Maxwell system, which is based on the differential operator $\curl^2$, preclude it from being the square of any self-adjoint curl operator \cite[Section 8]{HiptmairKotiugaTordeux2012}.)

A second application of the Hadamard rule is to significantly simplify certain calculations appearing in the treatment of optimal domains for the first curl eigenvalue, a topic which has recently attracted substantial interest (\cite{EncisoPeralta2020}, \cite{Gerner2023}, \cite{EncisoGernerPeralta2024}), as well as to generalize those results to arbitrary Lagrangian boundary conditions. We also deduce necessary conditions for a domain to be a local extremum of the $k^{th}$ eigenvalue functional.

As is customary, we fix the volume of $D$ to obtain a meaningful variational problem: Write $\mathcal X_1(D_0)$ for the subset of diffeomorphisms in $\mathcal X(D_0)$ with $|D_\Phi| = 1$. Let $L\in H^1_{dR} (\partial D_0;\mathbb C)$ be a Lagrangian. For $\Phi \in \mathcal X_1(D_0)$ and $k \in \mathbb N$, let $\lambda_k^L(\Phi)$ and $\lambda_{-k}^L(\Phi)$ denote the $k^{th}$ positive and negative eigenvalue of $\curl_{L_\Phi}$, respectively.

\begin{theorem}
    \label{thm:extremal_eigenvalues}
    Let $D_0 \subseteq \mathbb R^3$ and $L \subset H^1_{dR}(\partial D_0,\mathbb C)$.
     If $\Phi \in \mathcal X_1(D_0)$ is a local extremum of the $k^{th}$ eigenvalue functional $\lambda_k^L$, then there exists a family of pairwise orthogonal $\curl_{L_\Phi}$-eigenfields $(u_j)_{j=1}^\ell$ corresponding to the eigenvalue $\lambda_k^L(\Phi)$ which satisfy
    \begin{align*}
        |u_1|^2 + \ldots + |u_\ell|^2 = 1
    \end{align*}
    on the boundary of the domain $D_{\Phi}$. At local minima of $\lambda_1^L$ and local maxima of $\lambda_{-1}^L$, respectively, that eigenvalue is simple and the corresponding (suitably rescaled) eigenform $u$ satisfies $|u|^2 = 1$ on the boundary.
\end{theorem}

The last claim of \Cref{thm:extremal_eigenvalues} exists in the literature in certain special cases regarding the domain and boundary conditions (\cite{CantarellaDeTurckGluckTeytelA}, \cite{EncisoPeralta2020}, \cite{Gerner2023}). We provide it here again to show that it holds for any domain and Lagrangian boundary conditions. It should be noted that no domains which minimize $\lambda^L_1$ or maximize $\lambda^L_{-1}$ are known. In fact, it has been proven that there are no smooth axisymmetric domains (satisfying a certain additional geometric assumption on their boundary) which minimize $\lambda_1$ or maximize $\lambda_{-1}$ for Giga--Yoshida boundary conditions, see \cite{EncisoPeralta2020}. However, the existence of optimal domains within the class of convex domains was established in \cite{EncisoGernerPeralta2024}. (In this case, all Lagrangian boundary conditions coincide as the boundary topology is that of a sphere.) As far as the authors are aware, nothing is known about the existence of domains which extremize higher eigenvalues of the curl operator.

\subsection{Organization of this paper.} In Section \ref{section: preliminary observations} we review the self-adjoint realizations of the curl operator on smoothly bounded domains in the language of differential $2$-forms, which turns out to be advantageous for the perturbation theory employed later on. In particular, we relate the setting for the analysis of the curl operator of Giga and Yoshida \cite{GigaYoshida1990} to that of Hiptmair, Kotiuga, and Tordeux \cite{HiptmairKotiugaTordeux2012} in Subsection \ref{subsec:self-adjoint-realizations}. Section \ref{section: analytic dependence of the resolvent} is devoted to proving a multi-parameter analytic dependence lemma (Lemma \ref{lem:analytic_resolvent}) for $\curl_L$. This will be used both in the proof of the Hadamard rule for $\curl_L$ (Theorem \ref{thm:hadamard_rule}) in Section \ref{section: The Hadamard rule} as well as for that of Theorem \ref{thm:simple_eigenvalues} in Section \ref{section: generic simplicity of the spectrum}. Finally, Theorem \ref{thm:extremal_eigenvalues} is proven in Section \ref{section: critical points of the eigenvalue functionals}.

\subsection{Acknowledgements}
The first author is thankful to his advisor, Eugenia Malinnikova, for her expert guidance, many helpful discussions and insightful remarks. The second author thanks his advisor Vera V\'ertesi as well as Michael Eichmair for their constant support and helpful mentoring as well as the Vienna School of Mathematics for providing a stable and pleasant research environment. The authors express their heartfelt thanks to Thomas Körber for his meticulous reading of this manuscript and many valuable suggestions and to Davide Buoso for a friendly and informative discussion. This research was funded in part by the Austrian Science Fund (FWF) [10.55776/P34318] and [10.55776/Y963], as well as by the European Research Council (ERC) Consolidator Grant through the grant agreement 101001159. For open access purposes, the authors have applied a CC BY public copyright license to any author-accepted manuscript version arising from this submission.

Much of the relevant research was conducted during a research stay at the ICMAT in April 2024. The authors wish to express their gratitude to the institution and to Daniel Peralta-Salas in particular for their hospitality.

\section{Preliminary observations}\label{section: preliminary observations}

Although we are interested in the curl operator on domains in $\mathbb R^3$, much of the analysis of this situation is best expressed through the curl operator with respect to general Riemannian metrics on a fixed smoothly bounded domain. The curl operator arises from the exterior derivative on $1$-forms on three-dimensional smooth manifolds. With the Hodge-star operator induced by a Riemannian metric $g$, one obtains a degree-preserving operator, e.g.\ the operator $\star \circ d$ acting on $1$-forms as in \cite{HiptmairKotiugaTordeux2012}.
For variational arguments, we require a function space which is independent of the metric. This leads us instead to consider the operator $d \circ \star$ on $2$-forms: After identifying vector fields with $2$-forms via $u(\vec v,\vec w) := \det_g(\vec u, \vec v, \vec w)$, $d\circ \star$ corresponds to the curl operator on vector fields, and the conditions $\nabla \cdot u = 0$ and $\vec u \cdot \vec \nu = 0$ take the metric-independent and diffeomorphism-invariant shape $du = 0$ and $\iota_{\partial D}^\ast u = 0$. (Here and in the rest of this paper, $\iota_{\partial D}$ denotes the embedding $\partial D \hookrightarrow D$.)

\subsection{Various domains for the curl operator}\label{subsection: domains of curl}

Giga and Yoshida's curl operator, when expressed in the language of $2$-forms on a compact Riemannian three-manifold $(D,g)$ with boundary, is an unbounded self-adjoint operator on the space
\begin{align*}
    \begin{split}
    L^2_\Sigma(D) = \left\{u \in L^2(D,\Lambda^2 T^\ast D): du = 0, \iota_{\partial D}^\ast u = 0, \textstyle \int_S u = 0\right\}.
    \end{split}
\end{align*}
Here, $S$ ranges over all compact orientable surfaces in $D$ with boundary in $\partial D$. The exterior derivative in this definition must be understood in the weak sense. The functionals $u \mapsto \iota_{\partial D}^\ast u$ and $u \mapsto \textstyle \int_S u$ are bounded on the space of weakly closed $L^2$-forms, as follows from the Weyl decomposition \cite[Chapter 2]{Schwarz1995}. Hence, $L^2_\Sigma(D)$ is indeed a Banach space.
Note that $du = 0$ and $\iota_{\partial D}^\ast u = 0$ imply that $\int_S u$ depends only on the relative homology class of $S$ in $H_2 (\bar D,\partial D;\mathbb C)$. Thus, the condition $\int_S u = 0$ adds only a finite number of linear constraints.

The domain of Giga--Yoshida's $\curl$ operator is the space
\begin{align*}
    H^1_{\Sigma\Sigma}(D,g) = \left\{u \in L^2_\Sigma(D): d \hodge u \in L^2_\Sigma(D) \right\}.
\end{align*}
This space, unlike $L^2_\Sigma(D)$ itself, depends \emph{as a set} on the metric $g$, since the operator $\star$ occurs in its definition and involves the metric. We may write $H_{\Sigma\Sigma}^1(D,g)$ and $L^2_\Sigma(D)$ to emphasize this distinction and will keep to this convention in \Cref{section: preliminary observations} and \Cref{section: analytic dependence of the resolvent}. One should think of $L^2_\Sigma(D)$ as a Banach space equipped with a fixed reference norm equivalent to any norm arising from an inner product corresponding to a choice of smooth metric on $D$.

Dropping the \emph{zero-flux} condition $\int_S u = 0$ in the definition of $L^2_\Sigma (D)$ results in the space
\begin{align*}
    \begin{split}
    L^2_\partial(D) = \left\{u \in L^2(D,\Lambda^2 T^\ast D): du = 0, \iota_{\partial D}^\ast u = 0 \right\},
    \end{split}
\end{align*}
which is the direct sum of $L^2_\Sigma(D)$ and the finite-dimensional space of harmonic $2$-forms with vanishing tangential component, see \cite[Corollary 2.6.2]{Schwarz1995}. The space of harmonic $2$-forms will be denoted by $\mathcal H_\partial(D,g)$. Extending the domain of $\curl$ to 
\begin{align*}
    H^1_{\Sigma\partial}(D,g) = \left\{u \in L^2_\Sigma(D): d \hodge u \in L^2_\partial(D) \right\},
\end{align*}
we obtain a Fredholm operator with the same spectrum as before, which is, however, no longer self-adjoint \cite[Remark 2]{GigaYoshida1990}. Extending the domain further to 
\begin{align*}
    H^1_{\partial\partial}(D,g) = \left\{u \in L^2_\partial(D): d \hodge u \in L^2_\partial(D) \right\} = H^1_{\Sigma\partial}(D,g) \oplus \mathcal H_\partial(D,g),
\end{align*}
we obtain yet another Fredholm operator whose (point) spectrum, however, is all of $\mathbb C$, unless the space of harmonic forms $\mathcal H_\partial(D,g)$ is trivial \cite[Theorem 2]{GigaYoshida1990} which is true if and only if $\partial D$ is a union of spheres. (Our nomenclature for these spaces is essentially that of \cite{GigaYoshida1990}, except that we use the symbol $\partial$ instead of $\sigma$ to indicate tangential boundary conditions.)

To deal with variations in the metric, it is convenient to analyze the curl operator on a domain that does not depend on the metric $g$. This is achieved by considering the spaces 
\begin{align*}
    H^1_\partial(D) &= \{u \in H^1(D,\Omega^2(D)): du = 0, \iota_{\partial \Omega}^\ast u = 0\}, \\
    H^1_\Sigma(D) &= \{u \in H^1_\partial(D): \textstyle\int_S u = 0\} \text{ and } \\
    L^2_{cl}(D) &= \{u \in L^2(D,\Omega^2(D)): du = 0, \textstyle\int_{C} u = 0 \text{ for all connected components } C \subseteq \partial D\}.
\end{align*}

Note that the functional $u\mapsto\int_C u$ is bounded on the space of \emph{closed} $L^2$ forms (which is itself a closed subspace of $L^2(D,\Omega^2(D))$). Hence $L^2_{cl}(D)$ as defined above is a closed subspace of $L^2(D,\Omega^2(D))$. It follows e.g.\ from \cite[Theorem 1.1]{EncisoGarciaPeralta2018} that $\curl$ is a Fredholm operator from $H^1_\partial(D)$ to $L^2_{cl}(D)$, that its kernel is $\mathcal H_\partial(D,g)$, and that it is invertible from $H^1_\Sigma(D)$ to $L^2_{cl}(D)$. The relation of all spaces above is summarized by the following commutative diagram.

\begin{center}
\begin{tikzcd}
|[yshift=-0.6cm,xshift=-0.5cm,overlay]| \mathcal H_\partial(D,g) \arrow[r, hookrightarrow] \arrow[rd, hookrightarrow] & H^1_{\partial}(D) \arrow[r] & H^1_{\Sigma}(D) \arrow[r, "\curl_g"] & L^2_{cl}(D) \\
& H^1_{\partial\partial}(D) \arrow[r] \arrow[u,hookrightarrow] & H^1_{\Sigma\partial}(D) \arrow[r, "\curl_g"] \arrow[u,hookrightarrow] & L^2_{\partial}(D) \arrow[u,hookrightarrow] \\
& & H^1_{\Sigma\Sigma}(D) \arrow[r, "\curl_g"] \arrow[u,hookrightarrow]& L^2_{\Sigma}(D) \arrow[u,hookrightarrow]
\end{tikzcd}
\end{center}

For the convenience of the reader, let us collect the function spaces mentioned above with their definitions here once more.
\allowdisplaybreaks
\begin{align*}
     L^2_{cl}(D) &= \{u \in L^2(D,\Omega^2(D)): du = 0, \textstyle \int_{C} u = 0 \text{ for all connected components $C$ of $\partial D$}\} \\
     L^2_{\partial}(D) &= \{u \in L^2_{cl}(D):\iota_{\partial D}^\ast u = 0\} \\
     L^2_{\Sigma}(D) &= \{u \in L^2_{\partial}(D): \textstyle\int_S u = 0 \text{ for all $S \subset D$ with $\partial S \subset \partial D$}\} \\
     H^1_\partial(D) &= \{u \in H^1(D,\Omega^2(D)): du = 0, \iota_{\partial \Omega}^\ast u = 0 \} \\
     H^1_\Sigma(D) &= \{u \in H^1_\partial(D): \textstyle\int_S u = 0 \text{ for all $S \subset D$ with $\partial S \subset \partial D$}\} \\
     H^1_{\Sigma\partial}(D,g) &= \{u \in H^1_\Sigma(D): d\!\star\! u \in L^2_{\partial}(D)\} \\
     H^1_{\Sigma\Sigma}(D,g) &= \{u \in H^1_\Sigma(D): d\!\star\! u \in L^2_{\Sigma}(D)\} \\
     \mathcal H_\partial(D,g) &= \{u \in L^2_\partial(D): d\!\star\! u = 0\} \\
     H^1_{\partial\partial}(D,g) &= H^1_{\Sigma\partial}(D,g) \oplus \mathcal H_\partial(D,g)
\end{align*}

\subsection{Self-adjoint realizations of the curl operator}
\label{subsec:self-adjoint-realizations}

By restricting the domain of the operator $\curl: H^1_{\partial\partial}(D,g) \to L^2_\partial(D)$ to carefully chosen subspaces of $H^1_{\partial\partial}(D,g)$, a wealth of self-adjoint curl operators arises. These operators correspond to the self-adjoint extensions of $\curl$ based on closed traces, as presented in \cite{HiptmairKotiugaTordeux2012}, but restricted to the space of boundary-parallel, closed $2$-forms. Let us briefly recall the discussion in \cite{HiptmairKotiugaTordeux2012} as it applies in our setting.

We begin by considering the following identity, valid for all $u,v \in H^1_{\partial\partial}(D,g)$:
\begin{align}
    \label{eq:integration_by_parts}
    \left(\curl u, v\right)_{L^2_\partial(D,g)} - \left(u, \curl v\right)_{L^2_\partial(D,g)} = \int_{\partial D} \star u \wedge \star \bar v
\end{align}
Since $d\hodge u \in L^2_\partial(D)$, $\iota_{\partial D}^\ast(\star u)$ is a closed $1$-form on $\partial D$. Let $\ell$ denote the genus of $\partial D$. Then the de Rham cohomology group $H^1_{dR}(D,\mathbb C)$ is of rank $2\ell$, and the right hand side of \eqref{eq:integration_by_parts} descends to a sesquilinear symplectic form $\omega_D$ on $H^1_{dR}(D,\mathbb C)$.

Consider a Lagrangian subspace $L \subseteq H^1_{dR}(\partial D,\mathbb C)$, i.e.,\ an $\ell$-dimensional complex subspace such that $\omega_D(X,Y) = 0$ for all $X,Y \in L$. By Hiptmair--Kotiuga--Tordeux' work, restricting $\curl$ to the closed subspace
\begin{align*}
    H^1_L(D,g) := \{u \in H^1_{\partial\partial}(D,g): [\star u] \in L\},
\end{align*}
yields an unbounded self-adjoint operator on $L^2_\partial(D,g)$. In fact, their work shows all subspaces of $H^1_{\partial\partial}(D,g)$ to which $\curl$ restricts to an unbounded self-adjoint operator on $L^2_\partial(D)$ are of this form for some Lagrangian $L \subseteq H^1_{dR}(D,\mathbb C)$ (See \cite[Section 6]{HiptmairKotiugaTordeux2012}, especially \cite[Theorem 6.4]{HiptmairKotiugaTordeux2012}).

\subsubsection{Reinterpreting the zero-flux boundary condition}
\label{sec:zero_flux_reinterpretation}

The zero-flux boundary conditions can be seen as a special case of this setup as follows: Any surface $\Sigma \subset D$ with boundary on $\partial D$ gives rise to a cycle $\partial \Sigma$ in $\partial D$. The space $L_\Sigma \subseteq H^1_{dR}(\partial D; \mathbb C)$ cut out by all linear equations of the form $\int_{\partial \Sigma} u = 0$ arising from surfaces $\Sigma\subset D$ with boundary in $\partial D$ is Lagrangian. Indeed, using standard methods (see, for example, \cite{Kotiuga1987}) one can show that there exists a basis $[\Sigma_1],\dots,[\Sigma_{b_2 (D,\,\partial D)}]$ of the relative homology group $H_2 (\bar D,\partial D; \mathbb C)$ such that the homology classes $[\Sigma_i]$ are represented by compact embedded orientable surfaces $\Sigma_i$ in $D$ with boundary in $\partial D$. The image of the boundary map $\partial: H_2 (\bar D,\partial D; \mathbb C) \to H_1 (\partial D; \mathbb C)$ is a Lagrangian subspace (\cite[Proposition 9.1.4.]{Martelli2016AnIT}) with respect to the homological intersection number and so we may conclude that the collection of homology classes represented by $\partial \Sigma_1,\dots,\partial\Sigma_{b_2 (D,\,\partial D)} \subset \partial D$ generate a Lagrangian subspace $\Tilde{L}_\Sigma$ of $H_1 (\partial D, \mathbb C)$. As both the Poincar\'e duality isomorphism and the de Rham isomorphism preserve the intersection form \cite[Theorem 5.45.]{Warner1983}, the annihilator $L_\Sigma = \{[u] \in H^1_{dR}(\partial D, \mathbb C): \int_{\partial \Sigma} u = 0 \}$ of $\Tilde{L}_\Sigma$ is then a Lagrangian subspace in $H^1_{dR}(\partial D,\mathbb C)$.
It gives rise to the curl operator domain
\begin{align*}
    H^1_{L_\Sigma}(D,g) &= \left\{u \in H^1_{\partial \partial}(D,g): \textstyle\int_S \curl u = 0 \right\} = H^1_{\Sigma\Sigma}(D,g) \oplus \mathcal H_\partial(D,g).
\end{align*}
We conclude that the curl operator obtained from $L_\Sigma$ differs from Giga--Yoshida's curl operator only by the presence of its kernel, $\mathcal H_\partial(D,g)$, which is excised by the full zero-flux boundary conditions (which additionally require $\int_\Sigma u = 0$). While Giga--Yoshida's curl operator is an unbounded self-adjoint operator on $L^2_\Sigma(D,g)$, the curl operator obtained from $L_\Sigma$ is an unbounded self-adjoint operator on the larger space $L^2_\partial(D,g)$.

\section{Analytic dependence of the resolvent}\label{section: analytic dependence of the resolvent}

In this section, we show that resolvent of $\curl_L$ varies (real)-analytically along analytic perturbations of the domain and also depends analytically on the Lagrangian $L$. It is convenient to prove these statements in the more general setting of a smoothly bounded domain $D$ equipped with a metric $g_t$ which depends analytically on a parameter $t \in (-\varepsilon,\varepsilon)^k$. The case of a $k$-parameter variation of the domain boundary is encompassed by this: If $\Phi_t$ is an analytic $k$-parameter family of diffeomorphisms, then $g_t = \Phi_t^\ast g$ is an analytic $k$-parameter family of smooth metrics on $D$ and $\curl$ on $D_t = \Phi(D)$ is conjugate to $\curl_{g_t}$ on $D$ via the diffeomorphism $\Phi_t$. Furthermore, we also prove Lipschitz-continuous dependence of the resolvent with respect to $C^2$-smooth perturbations of the domain. 

Let us stress once more that the domain $H^1_{L}(D,g_t)$ of $\curl_{t,L}$ depends (as a set) on the metric $g_t$ and on the Lagrangian $L$, but the codomain, $L^2_{\partial}(D)$, is independent of both. Let $\iota: H^1_{L}(D,g_t) \to L^2_{\partial}(D)$ denote the canonical embedding, let $\sigma (\curl_{t,L})$ denote the spectrum of $\curl_{t,L}$ and let $\lambda \in \mathbb C$. The inverse of $(\lambda \iota - \curl_{t,L}): H^1_{L}(D,g_t) \to L^2_{\partial}(D)$ exists whenever $\lambda \not\in \sigma(\curl_{t,L})$. The operators $R_{t,L}(\lambda) = \iota \circ (\lambda \iota - \curl_{t,L})^{-1}$ then form a family of bounded operators on a \emph{fixed} space. This family is analytic in $t$ and $L$ (interpreting $L$ as a point in the Grassmanian $\mathrm{LGr}_\ell(H^1_{dR}(\partial D,\mathbb C))$ of complex Lagrangians in $H^1_{dR}(\partial D,\mathbb C)$, which is a real-analytic submanifold of the complex Grassmannian $\mathrm{Gr}_\ell(H^1_{dR}(\partial D,\mathbb C))$, as follows from \cite[Theorem 3]{EverittMarkus1999}).

\begin{lemma}
    \label{lem:analytic_resolvent}
    Let $D \subseteq \mathbb R^3$ be a smoothly bounded domain. Let $g_t$ be a family of smooth metrics on $\overline{D}$ depending real-analytically on $t \in (-\varepsilon,\varepsilon)^k$. To each $t \in (-\varepsilon,\varepsilon)^k$ and Lagrangian $L \in \mathrm{LGr}(H^1_{dR}(\partial D,\mathbb C))$, we associate the operator $\curl_{t,L}: H^1_{L}(D,g_t) \to L^2_\partial(D)$.
    
    Then the resolvents $R_{t,L}(\lambda): L^2_\partial(D) \to L^2_\partial(D)$ of $\curl_{t,L}: H^1_{L}(D,g_t) \to L^2_\partial(D)$ form a family of compact operators which is meromorphic in $\lambda$ for fixed $t$ and $L$ and real-analytic in $t$ and $L$ near any $(t,L,\lambda) \in (-\varepsilon,\varepsilon)^k \times \mathrm{LGr}(H^1_{dR}(\partial D,\mathbb C)) \times \mathbb C$ where $R_{t,L}(\lambda)$ is defined.
\end{lemma}

\begin{proof}
    The Hodge star operators $\star_t: H^1(\Omega^2(D)) \to H^1(\Omega^1(D))$ form a real-analytic family of bounded operators. It extends to a holomorphic family $\star_z$ of bounded operators on some neighborhood $U$ of $(-\varepsilon,\varepsilon)^k$ in $\mathbb C^k$. The operators $\curl_z := d \circ \star_z$ thus form a holomorphic family of bounded operators from $H^1_{\partial}(D)$ to $L^2_{cl}(D)$.
    
    Let us also choose a system of cut surfaces $\Sigma_1,\ldots,\Sigma_\ell \subset D$ which form a basis of the relative homology $H_2(\bar D, \partial D)$ (cf. \Cref{sec:zero_flux_reinterpretation}) and consider the map $\mathrm{flux}: H^1_\partial(D) \to \mathbb C^\ell$ given by
    \begin{align*}
        \mathrm{flux}(u) = \bigg(\int_{\Sigma_j} \! u\bigg)_{j=1}^\ell,
    \end{align*}
    which is bounded by the trace lemma. The restriction of smooth functions on $D$ to $\Sigma_j$ extends to a bounded linear functional from $H^1(D)$ to $L^{2}(\Sigma_j)$. Note that $\mathrm{flux}(u)$ is independent of $g_t$ and the choices of $\Sigma_j$ in $[\Sigma_j]$, the latter since $u$ is closed and $\iota_{\partial D}^\ast u = 0$. 

    Lastly, we introduce boundary conditions of the form
    \begin{align*}
        \sum_{j=1}^\ell \left(a_j \int_{\alpha_j} \! \star u + b_j \int_{\beta_j} \! \star u\right) = 0,
    \end{align*}
    where $a_j,b_j \in \mathbb C$ and $\alpha_j$, $\beta_j$ form a basis of $H_1(\partial D)$. (See \Cref{fig: alpha beta curve systems} for an illustration of the curve system $\alpha_j,\beta_k$.) A minor inconvenience which occurs here is that $H^1(\Omega^2(D))$ offers insufficient regularity to ensure continuity of the functional $u \mapsto \int_{\alpha_j} \star u$ since the curves $\alpha_j$ and $\beta_j$ are of codimension $2$. We fix this by considering a tubular neighborhood of each curve $\alpha_j$ in $\partial D$ which is foliated by curves $\{ \alpha_j^\tau, \tau \in [0,1]\}$ and introducing the functional $u \to \int_0^1\big(\int_{\alpha_j^\tau} \star u\big) d\tau$, which is bounded by the trace lemma. For $u \in H_{\partial\partial}(D)$, these two functionals agree as $\iota_{\partial D}^\ast \hodge u$ is closed. Let $\ell$ denote the genus of $\partial D$. Define
    \begin{align*}
        B&: H^1(\Omega^2(D)) \to \mathbb C^{2\ell} \\
        B(u) &= \left( \int_0^1\!\left(\int_{\alpha_j^\tau} \star u\right) d\tau , \int_0^1\!\left(\int_{\beta_j^\tau} \star u\right) d\tau \right)_{j=1}^{\ell}
    \end{align*}

     Let $f_1,\ldots,f_\ell \in (\mathbb C^{2\ell})^\ast$ be $\ell$ linearly independent linear functionals such that $[\star u]$ lies in the Lagrangian $L$ if and only if $f_j \circ B(u) = 0$ for $j=1,\ldots,\ell$. Write $B_L: H^2_{\partial}(D) \to \mathbb C^\ell$, $B_L = \left(f_j \circ B\right)_{j=1}^\ell$, so that $H^1_L(D) = H^1_{\partial\partial}(D) \cap B_L^{-1}(\{0\})$. On a sufficiently small chart of $\mathrm{LGr}_\ell(H^1_{dR}(\partial D,\mathbb C))$ we may choose $f_1,\ldots,f_\ell$ which depend real-analytically on $L$. With this choice, the operator $B_L$ then depends real-analytically on $L$. Let $V \subseteq \mathbb C^{\dim_{\mathbb R} \mathrm{LGr}_\ell(H^1_{dR}(\partial D,\mathbb C))}$ denote a neighborhood of the chart to which $B_L$ extends holomorphically.
    
    It follows e.g.\ from \cite[Theorem 1.1]{EncisoGarciaPeralta2018} that $\curl_t: H^1_{\Sigma}(D) \to L^2_{cl}(D)$ is invertible for real $t$. Since $H^1_{\partial}(D) = H^1_{\Sigma}(D) \oplus \mathcal H_{\partial}(D,g_t)$ and $\flux: \mathcal H_{\partial}(D,g_t) \to \mathbb C^{\ell}$ is bijective (\cite[Corollary 2.6.2]{Schwarz1995},\cite[Theorem 1]{CantarellaDeTurckGluck2002topology}), $\curl_t \oplus \flux$ has an inverse $A_t: L^2_{cl}(D) \oplus \mathbb C^{\ell} \to H^1_\partial(D)$. Let $\iota: H^1_{\partial}(D) \to L^2_{cl}(D)$ denote the natural embedding. The formula 
    \begin{align*}
        A_z&: L^2_{cl}(D) \to H^1_{\Sigma}(D) \\
        A_z &:= A_t(\mathds{1} - (\curl_z \oplus \flux - \curl_t \oplus \flux) \circ A_t)^{-1}
    \end{align*}
    provides an inverse to $\curl_z \oplus \flux$ for all $z \in U$ where $\mathds{1} - (\curl_z \oplus \flux - \curl_t \oplus \flux) \circ A_t$ is invertible. After restricting $U$ to a smaller neighborhood of $(-\varepsilon,\varepsilon)^k$, we may assume this is the case on all of $U$, since $\curl_z$ depends continuously on $z$ in the norm topology on $\mathcal L(H^1_\partial(D),L^2_{cl}(D))$. Hence, $(A_z)_{z \in U}$ is an analytic family of operators on $L^2_{cl}(D)$.
    Next, consider the family of operators
    \begin{align*}
        R(s,t,L,\lambda)&: L^2_{cl}(D) \oplus \mathbb C^{\ell} \to L^2_{cl}(D) \oplus \mathbb C^{\ell} \\
        R(s,t,L,\lambda) &= \mathds{1} - \big( \lambda \iota \oplus (s \flux - s B_L) \big) \circ A_t,
    \end{align*}
    which is analytic in $t \in U$, $L \in V$ and $s, \lambda \in \mathbb C$. Note that $\lambda \iota \oplus (s \flux - s B_L)$ is a compact operator from $H^1_\partial(D)$ to $L^2_{cl}(D) \oplus \mathbb C^\ell$. Since $R(0,t,L,0) = \mathds{1}$ is invertible, the hypotheses of a multivariable version of the Analytic Fredholm Theorem are met \cite[Theorem 3]{StessinYangZhu2011}. It follows that there exists an analytic variety $\mathcal S \subseteq \mathbb C \times U \times V \times \mathbb C$ such that $R(s,t,L,\lambda)$ is invertible if $(s,t,L,\lambda) \not\in \mathcal S$ and such that $R(s,t,L,\lambda)^{-1}$ is analytic on the complement of $\mathcal S$.
    
    Furthermore, the points $(s,t,L,\lambda) \in \mathcal S$ are precisely those where $R(s,t,L,\lambda)$ has a finite-dimensional kernel. In this case, there exist $(v,f) \in L^2_{cl}(D) \oplus \mathbb C^{\ell}$ such that $v = \lambda u$ and $f = s \flux u - s B_L(u)$, where $u = A_t (v,f)$ is the unique form satisfying $\flux u = f$ and $\curl_t u = v$. If $s=1$, the form $u$ is then an eigenform of $\curl_{t,L}$ in $H^1_L(D,g_t)$ corresponding to the eigenvalue $\lambda$.
    
    Next, we analyze the complement of $\mathcal S$, i.e.\ those points where $R(1,t,L,\lambda)$ is invertible. Consider $(v,f) \in L^2_{cl}(D) \oplus \mathbb C^{\ell}$ and write 
    \begin{align*}
        (v',f') &= R(1,t,L,\lambda)^{-1}(v,f), \
        A_t (v',f') = u'
    \end{align*}
    Then $R(1,t,L,\lambda) (v',f') = (v,f)$ implies
    \begin{align*}
        v' - \lambda u' &= \curl u' - \lambda u' = v, \\
        f' - f' + B_L(u') &= B_L(u') = f.
    \end{align*}
    Thus, $(\curl - \lambda \iota) \oplus B_L$ has an explicit inverse in 
    \begin{align*}
        A_t \circ R(1,t,L,\lambda)^{-1},
    \end{align*}
    whenever $\lambda$ is not contained in the spectrum of $\curl_L$. By construction, this operator is analytic in $t$, $L$ and $\lambda$. The resolvent of $\curl_{t,L}: H^1_L(D,g_t) \to L^2_{cl}(D)$, interpreted as a bounded linear operator on $L^2_{cl}(D)$, is given by restricting
    \begin{align*}
        R_{t,L}(\lambda) := \iota \circ A_t \circ R(1,t,L,\lambda)^{-1}
    \end{align*}
    to $L^2_\partial(D) \simeq L^2_\partial(D) \oplus \{0\} \subseteq L^2_{cl}(D)$. Thus, the resolvent $R_{t,L}(\lambda)$ is meromorphic in $\lambda$ and real-analytic in $t$ and $L$ near any $(t,L,\lambda)$ where it is defined.
\end{proof}

\begin{remark}
  In the proof of Lemma \ref{lem:analytic_resolvent} we used that the map $\flux:\mathcal{H}_\partial (D,g_t)\to \mathbb C^\ell$ is a linear isomorphism. In particular, the dimension of the space of boundary parallel harmonic vector fields equals the genus of $\partial D$. Equivalently, the equality $b_1(D)=\frac{1}{2}b_1 (\partial D)$ holds for domains in $\mathbb R^3$ (see \cite[Theorem 1]{CantarellaDeTurckGluck2002topology}). Here $b_1$ denotes the first Betti number, the dimension of the first (Co-) Homology group. For a general compact $3$-manifold $M$ with boundary one only has that $b_1 (M)\geq \frac{1}{2}b_1 (\partial M)$ \cite[Corollary 9.1.5]{Martelli2016AnIT}. It is easy to see that the inequality becomes strict after taking a connected sum with any closed manifold that is not a rational homology sphere.
\end{remark}

\begin{figure}[t]
\includegraphics[width=1\textwidth]{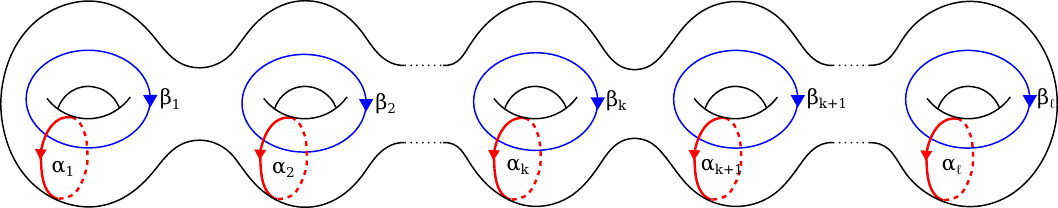}
\caption{The curves $\alpha_j$ and $\beta_j$. Note that the surface $\partial D$ need not be connected.}
\label{fig: alpha beta curve systems}
\centering
\end{figure}

A straightforward modification of the proof of \Cref{lem:analytic_resolvent} also yields continuous dependence of the resolvent (in the norm topology) on the metric with respect to the $C^1$-norm. We write $R_{g,L}(\lambda)$ for the resolvent of $\curl_{g,L}: H^1_L(D,g) \to L^2_\partial(D)$.

\begin{lemma}
    \label{lem:continuous_resolvent}
    Let $D \subseteq \mathbb R^3$ be a smoothly bounded domain with boundary of genus $\ell$. Let $g_0$ be a smooth metric on $D$, let $L \in \mathrm{LGr}(\mathbb C^{2\ell})$ and let $\lambda \in \mathbb C$ such that $R_{g_0,L}(\lambda)$ exists. Then there exist $\varepsilon > 0$ and $C > 0$ such that for all $g$ with $\|g-g_0\|_{C^1} \leq \varepsilon$, the resolvent $R_{g,L}(\lambda)$ exists and such that
    \begin{align}
        \| R_{g,L}(\lambda) - R_{g_0,L}(\lambda) \|_{L^2_\partial(D) \to H^1(\Omega^2(D))} \leq C \|g-g_0\|_{C^1}
    \end{align}
    hold for all such metrics $g$.
\end{lemma}

\begin{proof}
    Since $\star_g: H^1(\Omega^2(D)) \to H^1(\Omega^1(D))$ arises from a vector bundle isomorphism which, in local coordinates, has coefficients that are smooth functions of components of the metric and its inverse, it depends Lipschitz-continuously on $g$ in the following manner: Given a smooth metric $g_0$ on $D$, there exist $\varepsilon > 0$ and $C > 0$ such that
    \begin{align}
        \label{eq:c1dependence}
        \| \star_{g} - \star_{g_0} \|_{H^1(\Omega^2(D)) \to H^1(\Omega^1(D))} \leq C \| g - g_0 \|_{C^1} 
    \end{align}
    for all $g$ with $\| g - g_0 \|_{C^1} < \varepsilon$. Since $d: H^1(\Omega^2(D)) \to L^2(\Omega^2(D))$ and $\flux: H^1(\Omega^2(D)) \to \mathbb C^\ell$ are bounded and independent of $g$, it follows that
    \begin{align}
        \label{eq:c1dependence}
        \| \curl_g \oplus \flux - \curl_{g_0} \oplus \flux \|_{H^1(\Omega^2(D)) \to L^2(\Omega^2(D)) \oplus \mathbb C^\ell} \leq C \| g - g_0 \|_{C^1}.
    \end{align}
    Consider the inverse $A_{g}: L^2_{cl}(D) \to H^1_\partial(D)$ to the operator $\curl_g \oplus \flux$, as constructed in the proof of \Cref{lem:analytic_resolvent}. It is expressed by
    \begin{align*}
        A_g &:= A_{g_0}(\mathds{1} - (\curl_{g} \oplus \flux - \curl_{g_0} \oplus \flux) \circ A_{g_0})^{-1},
    \end{align*}
    whenever $\mathds{1} - (\curl_{g} \oplus \flux - \curl_{g_0} \oplus \flux) \circ A_{g_0}$ is invertible. By \eqref{eq:c1dependence} this holds for all $g$ with $\| g - g_0 \|_{C^1} < \varepsilon$ after possibly shrinking $\varepsilon$. It follows that there exists $C' > 0$ (depending on $C$ and $\|A_{g_0}\|_{L^2_{cl}(D)\oplus \mathbb C^\ell \to H^1_\partial(D)}$) such that
    \begin{align*}
        \| A_{g} - A_{g_0} \|_{L^2_{cl}(D)\oplus \mathbb C^\ell \to H^1_\partial(D)} \leq C' \| g - g_0 \|_{C^1}
    \end{align*}
    for all $g$ with $\| g - g_0 \|_{C^1} < \varepsilon$. Next, consider the Fredholm operator
    \begin{align*}
        R(1,g,L,\lambda) &= \mathds{1} - \big( \lambda \iota \oplus (\flux - B_L) \big) \circ A_g
    \end{align*}
    from the proof of \Cref{lem:analytic_resolvent}. It was shown there that $R(1,g,L,\lambda)$ is invertible if and only if $\lambda \not\in \sigma(\curl_{g,L})$, so $R(1,g_0,L,\lambda)$ is invertible.
    Observe that
    \begin{align*}
        &\|R(1,g,L,\lambda) - R(1,g_0,L,\lambda)\|_{L^2_{cl}(D)\oplus \mathbb C^\ell \to L^2_{cl}(D)\oplus \mathbb C^\ell} \\ = \ &\| \big( \lambda \iota \oplus (\flux - B_L) \big) \circ (A_g - A_{g_0})\|_{L^2_{cl}(D)\oplus \mathbb C^\ell \to L^2_{cl}(D)\oplus \mathbb C^\ell} \leq C'' \| g - g_0 \|_{C^1},
    \end{align*}
    for some $C'' > 0$. Since invertibility of Fredholm operators is an open condition, $R(1,g,L,\lambda)^{-1}$ is thus invertible for all $g$ with $\|g-g_0\|_{C^1} < \varepsilon$ after perhaps shrinking $\varepsilon > 0$ once more, and
    \begin{align*}
        \| A_g \circ R(1,g,L,\lambda)^{-1} - A_{g_0} \circ R(1,g_0,L,\lambda)^{-1} \|_{L^2_{cl}(D)\oplus \mathbb C^\ell \to H^1_\partial(D)} &\leq C''' \| g - g_0 \|_{C^1}.
    \end{align*}
    Since $R_{g,L}(\lambda)$ is the restriction of $\iota \circ A_g \circ R(1,g,L,\lambda)^{-1}$ to $L^2_{\partial}(D) \simeq L^2_{\partial}(D) \oplus \{0\}$, this establishes the claimed Lipschitz-continuous dependence of $R_{g,L}(\lambda)$ on $g$.
\end{proof}

Basic perturbation theory of operators with compact resolvent now shows that the spectrum of $\curl_{g,L}$ depends continuously on $g$ with respect to the $C^1$-norm. We record the proof for the convenience of the reader.

\begin{corollary}
    \label{cor:continuous_spectrum}
    Let $D$, $g_0$ and $L$ be as in \Cref{lem:continuous_resolvent}. Let $\lambda$ be an eigenvalue of $\curl_{g_0,L}$ of multiplicity $m \in \mathbb N$. Then there exist $\varepsilon > 0$, an open neighborhood $\mathcal U$ of $g_0$ in the space of smooth metrics on $D$ and $m$ continuous functions $\lambda_1,\ldots,\lambda_m: \mathcal U \to \mathbb R$ such that 
    \begin{align*}
        \sigma(\curl_{g,L}) \cap (\lambda-\varepsilon,\lambda+\varepsilon) = \{\lambda_1(g),\ldots,\lambda_m(g)\}
    \end{align*}
    for every $g \in \mathcal U$.
\end{corollary}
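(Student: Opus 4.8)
The plan is a standard Riesz‑projection argument, using \Cref{lem:continuous_resolvent} in place of the usual norm–resolvent continuity hypothesis. First I would fix $r>0$ smaller than the distance from $\lambda$ to the rest of $\sigma(\curl_{g_0,L})$, so that the circle $\gamma=\{\mu\in\mathbb C:|\mu-\lambda|=r\}$ encloses $\lambda$ and no other point of $\sigma(\curl_{g_0,L})$. For every $\mu\in\gamma$ the resolvent $R_{g_0,L}(\mu)$ exists, so \Cref{lem:continuous_resolvent} supplies, for each such $\mu$, a neighbourhood of $g_0$ and a Lipschitz bound; covering the compact set $\gamma$ by finitely many of these and taking the minimum of the radii and the maximum of the constants, I obtain a single neighbourhood $\mathcal U_0$ of $g_0$ and a constant $C$ with $R_{g,L}(\mu)$ defined and $\sup_{\mu\in\gamma}\|R_{g,L}(\mu)-R_{g_0,L}(\mu)\|\le C\|g-g_0\|_{C^1}$ for all $g\in\mathcal U_0$. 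In particular no eigenvalue of $\curl_{g,L}$ lies on $\gamma$ for $g\in\mathcal U_0$, and the Riesz projections
\begin{align*}
    P_g := \frac{1}{2\pi i}\oint_\gamma R_{g,L}(\mu)\,d\mu
\end{align*}
are well‑defined bounded projections on $L^2_\partial(D)$ depending continuously on $g\in\mathcal U_0$ in operator norm.

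Next I would pin down the rank of $P_g$. Since $\curl_{g_0,L}$ is self‑adjoint, $\lambda$ has no generalised eigenvectors, so $P_{g_0}$ is the projection onto the $\lambda$‑eigenspace and $\operatorname{rank}P_{g_0}=m$. Shrinking $\mathcal U_0$ so that $\|P_g-P_{g_0}\|<1$, the standard fact that two idempotents at norm‑distance less than $1$ are similar gives $\operatorname{rank}P_g=m$ on this smaller neighbourhood $\mathcal U$. The range of $P_g$ is the span of the generalised eigenspaces of $\curl_{g,L}$ for the eigenvalues strictly inside $\gamma$; by self‑adjointness these are finitely many \emph{real} eigenvalues whose geometric multiplicities already sum to $m$. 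As every eigenvalue of $\curl_{g,L}$ is real, these are exactly the eigenvalues in $(\lambda-r,\lambda+r)$, so setting $\varepsilon:=r$ yields $\sigma(\curl_{g,L})\cap(\lambda-\varepsilon,\lambda+\varepsilon)=\sigma\big(\curl_{g,L}|_{\operatorname{range}P_g}\big)$, a multiset of $m$ real points.

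It remains to see that these $m$ eigenvalues (listed with multiplicity) vary continuously. I would use Kato's transformation function: for $g\in\mathcal U$ the operator $U_g:=\big(\mathds{1}-(P_g-P_{g_0})^2\big)^{-1/2}\big(P_gP_{g_0}+(\mathds{1}-P_g)(\mathds{1}-P_{g_0})\big)$ restricts to a linear isomorphism $\operatorname{range}P_{g_0}\to\operatorname{range}P_g$, depends continuously on $g$, and satisfies $U_{g_0}=\mathds{1}$. The finite‑rank operators $\curl_{g,L}P_g=\tfrac{1}{2\pi i}\oint_\gamma \mu\,R_{g,L}(\mu)\,d\mu$ also depend continuously on $g$, so the conjugates $M_g:=U_g^{-1}(\curl_{g,L}P_g)\,U_g$ form a continuous family of endomorphisms of the \emph{fixed} $m$‑dimensional space $\operatorname{range}P_{g_0}$, each self‑adjoint for the continuously varying inner product transported by $U_g^{-1}$ from $L^2_\partial(D,g)$. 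The eigenvalues of $M_g$ are precisely $\sigma(\curl_{g,L}|_{\operatorname{range}P_g})$, and eigenvalues of a continuous family of matrices depend continuously on the parameter as unordered tuples (e.g.\ via Newton's identities applied to the continuous functions $g\mapsto\operatorname{tr}M_g^k$); ordering them as $\lambda_1(g)\le\dots\le\lambda_m(g)$ gives the desired continuous functions.

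The only genuinely delicate point — and the step I would be most careful with — is the passage from the pointwise statement of \Cref{lem:continuous_resolvent} to a bound on $\gamma$ uniform in $\mu$, since the neighbourhood and constant there a priori depend on $\mu$; this is precisely what the compactness of $\gamma$ handles. Everything else is routine finite‑dimensional perturbation theory, the one book‑keeping subtlety being that both the domains $H^1_L(D,g)$ and the inner products on $L^2_\partial(D)$ move with $g$, which is harmless because $\operatorname{range}P_g$ consists of honest eigenfields lying in every $H^1_L(D,g)$ and the conjugation by $U_g$ absorbs the motion of the inner product.
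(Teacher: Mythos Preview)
Your proposal is correct and follows essentially the same Riesz-projection reduction as the paper: encircle $\lambda$ by a contour, use compactness and \Cref{lem:continuous_resolvent} to get continuous spectral projections of constant rank $m$, transport to the fixed $m$-dimensional space $\operatorname{range}P_{g_0}$, and invoke finite-dimensional perturbation theory. The only cosmetic differences are that the paper uses the simpler similarity $(P_{g_0}P_g)^{-1}P_{g_0}$ in place of Kato's $U_g$, and handles boundedness of $\curl_g\circ P_g$ by noting that $P_g$ actually lands in $H^1_\partial(D)$ (via \Cref{lem:continuous_resolvent}) rather than via your contour-integral formula $\curl_{g,L}P_g=\tfrac{1}{2\pi i}\oint_\gamma \mu\,R_{g,L}(\mu)\,d\mu$.
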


\begin{proof}
    We reduce the problem to a finite-dimensional one as follows. Let $\Gamma \subseteq \mathbb C$ be a closed curve encircling $\lambda$, but no other point of $\sigma(\curl_{g_0,L})$. Since $\Gamma$ is compact, there exists $\delta > 0$ such that $R_{g,L}(\zeta)$ exists for all $\zeta \in \Gamma$ and $g$ with $\|g-g_0\|_{C^1} < \delta$. Let $\mathcal U$ be the set of metrics with $\|g-g_0\|_{C^1} < \delta$. By \Cref{lem:continuous_resolvent} and the dominated convergence theorem, the spectral projection 
    \begin{align*}
        P_\Gamma(g) := \frac{1}{2\pi i} \int_\Gamma R_{g,L}(\zeta) d\zeta
    \end{align*}
    depends continuously on $g \in \mathcal U$. Since $P_\Gamma(g)$ is a continuously varying projection, its rank is constant, so $\mathrm{rk} P_\Gamma(g) = \mathrm{rk} P_\Gamma(g_0) = m$.
    
    Let $E_\Gamma(g) \subseteq L^2_\partial(D)$ denote the image of $P_\Gamma$, i.e.,\ the sum of the eigenspaces corresponding to eigenvalues encircled by $\Gamma$. The operator 
    $P_\Gamma(g_0) P_\Gamma(g): E_\Gamma(g_0) \to E_\Gamma(g_0)$ depends continuously on $g$ and acts as the identity when $g = g_0$. Hence one may shrink $\mathcal U$ so $(P_\Gamma(g_0) P_\Gamma(g))^{-1}$ exists on $\mathcal U$. Consider the family of operators on the finite-dimensional space $E_\Gamma(g_0)$ given by
    \begin{align*}
        A(g) := \left( P_\Gamma(g_0) P_\Gamma(g) \right)^{-1} \circ P_\Gamma(g_0) \circ \curl_{g} \circ P_\Gamma(g).
    \end{align*}
    Note that $A(g)$ is a concatenation of \emph{bounded} linear operators as follows:
    
    \begin{tikzcd}[column sep = huge]
        L^2_\partial(D) \arrow[r, "P_\Gamma(g)"] & H^1_\partial(D) \arrow[r, "\curl_g"] & L^2_\partial(D) \arrow[r, "P_\Gamma(g_0)"] & E_\Gamma(g_0) \arrow[r, "\left( P_\Gamma(g_0) P_\Gamma(g) \right)^{-1}"] & E_\Gamma(g_0)
    \end{tikzcd}
    
    \noindent All operators depend continuously on $g$. Plugging in the eigenbasis of $\curl_g$ shows that $A(g)$ shares its spectrum with $\curl_g$:
    \begin{align*}
        \sigma(A(g)) = \sigma(\curl_{g,L} \cap (\lambda-\varepsilon,\lambda+\varepsilon)
    \end{align*}
    Since $A(g)$ depends continuously on $g$ (in fact Lipschitz continuously with respect to the $C^1$ norm), the continuous dependence of the spectrum follows from the perturbation theory of finite-dimensional operators, see, e.g.,\ \cite[Theorem 5.1 and Theorem 5.2]{Kato} as well as the discussion leading up to \cite[Theorem 5.2]{Kato}.
\end{proof}

\begin{remark}\label{remark: continuity of spectrum}
    In the following, our metrics $g$ will be of the form $g = \Phi^\ast g_0$ for a fixed ambient metric $g_0$ and a diffeomorphism $\Phi$. \Cref{cor:continuous_spectrum} implies that the spectrum of $\curl_{\Phi^\ast g_0,L}$ depends continuously on $\Phi$ with respect to the $C^2$-norm. By conjugation, the spectrum of the curl operator on $\Phi(D)$ with domain $H^1_L(\Phi(D),g_0)$ depends continuously on $\Phi$ in the same way.
\end{remark}

As a consequence of \Cref{lem:analytic_resolvent} and (a generalization of) Rellich's theorem for analytic \emph{one-parameter} families of operators, we also obtain the following corollary. 

\begin{corollary}
    \label{cor:Rellich}
    Let $D$ and $L$ be as in \Cref{lem:continuous_resolvent}. Let $g_t$, $t \in (-\varepsilon,\varepsilon)$ be a real-analytic one-parameter family of smooth metrics on $D$. Consider an eigenvalue $\lambda$ of $\curl_{g_0,L}$ of multiplicity $m$ with corresponding eigenspace $E_\lambda$. Then there exist analytic functions $\lambda_1(t), \ldots, \lambda_m(t)$ and analytic one-parameter families $u_1(t), \ldots, u_m(t)$ in $L^2_\partial(D)$ such that
    \begin{enumerate}
        \item $u_j(t) \in H^1_L(D,g_t)$ and $\curl_{g_t,L} u_j(t) = \lambda_j(t)\, u_j(t)$ for all $j \in \{1,\ldots,m\}$,
        \item $(u_j(0))_{j=1}^m$ forms an orthonormal basis of $E_\lambda$, and
        \item $(u_j(t))_{j=1}^m$ is an orthonormal set with respect to $g_t$ for any $t \in (-\varepsilon,\varepsilon)$.
    \end{enumerate}
\end{corollary}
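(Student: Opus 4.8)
The plan is to combine the analytic resolvent of \Cref{lem:analytic_resolvent} with a Riesz spectral projection, thereby reducing the problem to a finite-dimensional analytic perturbation problem, and then to invoke the classical Rellich theorem for analytic one-parameter families of Hermitian matrices. The one genuinely delicate point is that the ambient inner product $(\cdot,\cdot)_{g_t}$ on $L^2_\partial(D)$ itself varies with $t$, so the finite-dimensional model operator has to be set up with respect to a $g_t$-orthonormal analytic frame of the spectral subspace; this is also why the statement must be restricted to one-parameter families, the matrix Rellich theorem being known to fail in several parameters.

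First I would pick a small circle $\Gamma \subseteq \mathbb C$ enclosing $\lambda$ and no other point of $\sigma(\curl_{g_0,L})$. By \Cref{lem:analytic_resolvent} and compactness of $\Gamma$ there is $\delta > 0$ so that $R_{g_t,L}(\zeta)$ exists for all $\zeta \in \Gamma$ and $|t|<\delta$, and the spectral projection
\begin{align*}
    P(t) := \frac{1}{2\pi i}\int_\Gamma R_{g_t,L}(\zeta)\,d\zeta
\end{align*}
is a real-analytic family of finite-rank projections on the fixed space $L^2_\partial(D)$, of rank constantly equal to $m$ by the continuity argument already used in the proof of \Cref{cor:continuous_spectrum}. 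Since $\curl_{g_t,L}$ commutes with $P(t)$, the space $E_t := \operatorname{ran}P(t)$ is invariant and contained in $H^1_L(D,g_t)$, and the resolvent identity $(\zeta\iota - \curl_{g_t,L})\circ R_{g_t,L}(\zeta) = \iota$ yields
\begin{align*}
    \curl_{g_t,L}\circ P(t) = \frac{1}{2\pi i}\int_\Gamma \zeta\,R_{g_t,L}(\zeta)\,d\zeta,
\end{align*}
which is again real-analytic in $t$ as a bounded operator on $L^2_\partial(D)$.

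Next I would pass to the finite-dimensional subspaces $E_t$. Fixing any $g_0$-orthonormal basis $(v_j)_{j=1}^m$ of $E_\lambda = E_0$ and setting $e_j(t) := P(t)v_j \in E_t$, both $t \mapsto e_j(t)$ and $t \mapsto \curl_{g_t,L}e_j(t) = (\curl_{g_t,L}\circ P(t))v_j$ are real-analytic $L^2_\partial(D)$-valued families, with $e_j(0) = v_j$, so the $e_j(t)$ stay linearly independent for small $t$. Because the $L^2$-pairing $(\cdot,\cdot)_{g_t}$ has coefficients depending real-analytically on $g_t$, the Gram matrix $M(t) := \big((e_i(t),e_j(t))_{g_t}\big)_{i,j}$ is a real-analytic family of positive-definite Hermitian matrices with $M(0) = \mathds{1}$; hence $M(t)^{-1/2}$ is real-analytic, and $f_j(t) := \sum_k (M(t)^{-1/2})_{kj}\,e_k(t)$ is a real-analytic frame of $E_t$, orthonormal for $(\cdot,\cdot)_{g_t}$, with $f_j(0) = v_j$. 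In this frame the restriction $\curl_{g_t,L}|_{E_t}$, which is self-adjoint for $(\cdot,\cdot)_{g_t}$, is represented by the real-analytic family of Hermitian matrices $A(t) := \big((\curl_{g_t,L}f_j(t),f_i(t))_{g_t}\big)_{i,j}$, with $A(0) = \lambda\,\mathds{1}$.

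Finally I would invoke the Rellich theorem for analytic one-parameter families of Hermitian matrices (see, e.g., \cite[Chapter~II, \S6]{Kato}) to obtain real-analytic functions $\lambda_1(t),\dots,\lambda_m(t)$ and a real-analytic family of unitary matrices $V(t)$ with $V(t)^\ast A(t) V(t) = \operatorname{diag}(\lambda_1(t),\dots,\lambda_m(t))$. Then $u_j(t) := \sum_k V(t)_{kj}\,f_k(t)$ defines real-analytic $L^2_\partial(D)$-valued families with $u_j(t) \in H^1_L(D,g_t)$, $\curl_{g_t,L}u_j(t) = \lambda_j(t)u_j(t)$, and $(u_j(t))_{j}$ orthonormal for $(\cdot,\cdot)_{g_t}$; moreover $A(0) = \lambda\,\mathds{1}$ forces $\lambda_j(0) = \lambda$ and makes $(u_j(0))_j$, being the image of $(v_j)_j$ under the unitary $V(0)$, a $g_0$-orthonormal basis of $E_\lambda$. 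This proves all three assertions. The main obstacle is, as noted, the $t$-dependence of the inner product, dealt with via the $M(t)^{-1/2}$ normalization; after that the argument is classical finite-dimensional analytic perturbation theory, all the infinite-dimensional analyticity having been supplied by \Cref{lem:analytic_resolvent}.
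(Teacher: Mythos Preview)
Your proposal is correct and follows essentially the same route as the paper: isolate the eigenvalue cluster via the analytic Riesz projection $P(t)$ coming from \Cref{lem:analytic_resolvent}, build an analytic $g_t$-orthonormal frame of its range to account for the $t$-dependent inner product, and then apply the classical Rellich theorem for analytic one-parameter families of Hermitian matrices. The only cosmetic difference is that the paper transfers everything to the fixed space $E_\lambda$ via $(P_\Gamma(g_{t_0})P_\Gamma(g_t))^{-1}$ and then orthonormalizes by Gram--Schmidt, whereas you work directly with the frame $P(t)v_j$ in $E_t$ and orthonormalize via $M(t)^{-1/2}$; both yield an analytic orthonormal frame and the remainder of the argument is identical.
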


\begin{proof}
    Fix an arbitrary point $t_0 \in (-\varepsilon,\varepsilon)$. The first two paragraphs of the proof of \Cref{cor:continuous_spectrum} carry over to the analytic setting. After possibly shrinking the interval $(-\varepsilon,\varepsilon)$ to a smaller interval centered around $t_0$ to ensure invertibility of $\left(P_\Gamma(g_{t_0}) P_\Gamma(g_t) \right)^{-1}$, we thus obtain an analytic one-parameter family
    \begin{align*}
        A(g_t) := \left( P_\Gamma(g_{t_0}) P_\Gamma(g_t) \right)^{-1} \circ P_\Gamma(g_{t_0}) \circ \curl_{g_t} \circ P_\Gamma(g_t).
    \end{align*}
    of operators on the finite-dimensional space $E(g_{t_0})$. Note that $A(g_t)$ is symmetric with respect to the inner product $\left( \cdot, \cdot \right)_{t} := \left( P_\Gamma(g_t) \cdot, P_\Gamma(g_t) \cdot \right)_{g_t}$, but not necessarily with respect to a fixed inner product on $E(g_{t_0})$. Performing Gram--Schmidt orthogonalization on some fixed choice of real basis of $E(g_{t_0})$ with respect to $\left( \cdot, \cdot \right)_{t}$ yields a real-analytic family of maps $Q_t : \mathbb C^{m} \to E(g_{t_0})$ intertwining the standard inner product on $\mathbb C^m$ and $\left( \cdot, \cdot \right)_{t}$. Setting
    \begin{align*}
        B_t = Q_t^{-1} \circ A(g_t) \circ Q_t
    \end{align*}
    yields a real-analytic one-parameter family of symmetric $m\times m$-matrices. By Rellich's theorem for analytic one-parameter families of self-adjoint matrices \cite[Theorem 2]{RellichI}, there exist real-analytic functions $\lambda_1(t),\ldots,\lambda_m(t) \in \mathbb R$ and real-analytic families $v_1(t),\ldots,v_m(t) \in \mathbb C^m$ such that the following hold:
    \begin{enumerate}[label=(\arabic*)]
        \item $\sigma(B_t) = \{\lambda_1(t),\ldots,\lambda_m(t)\}$, counted with multiplicity,
        \item $B_t v_j(t) = \lambda_j(t) v_j(t)$ for all $j \in \{1,\ldots,m\}$,
        \item $v_j \cdot v_k = 1$ if $j = k$ and $0$ if $j \neq k$.
    \end{enumerate}
    Setting $u_j(t) = P_\Gamma(g_t) Q_t v_j(t)$ and tracing back through the definitions of $A(g_t)$ and $B_t$ completes the proof.
\end{proof}

This observation will let us use our Hadamard rule, which is proven in the following section, in the analysis of eigenvalues of higher multiplicity as well as of simple ones.

\section{The Hadamard rule}\label{section: The Hadamard rule}

In this section, we prove formula \eqref{eq:hadamard_rule_introduction} for the variation of a curl eigenvalue under domain deformations. In its statement, $(u\cdot v)$ denotes the (sesquilinear) pointwise inner product of two-forms, which is of course the same as the inner products $(\star u \cdot \star v)$ and $(\star u^\# \cdot \star v^\#)$ of the corresponding one-forms and their vector proxies, respectively.

\begin{theorem}[Hadamard rule]
\label{thm:hadamard_rule}
Let $D \subseteq \mathbb R^3$ be a smoothly bounded domain and $\Phi_t: D \to \mathbb R^3$ a smooth one-parameter family of embeddings of $D$. Write $D_t = \Phi_t (D)$, and $X_t = \frac{d\Phi_t}{dt}$. Furthermore, fix a Lagrangian $L \subseteq H^1_{dR}(\partial D;\mathbb C)$.
\begin{enumerate}[label=(\alph*)]
    \item Suppose that $u_t$ is a smooth $1$-parameter family of closed, boundary-parallel $2$-forms which solves $d ( \star u_t ) = \lambda_t u_t$ on $D_t$ and satisfies $[\Phi_t^\ast ( \star u_t)] \in L$ for all $t$.
    Then
    \begin{align}
        \Dot{\lambda} = - \lambda \int_{\partial D} (X \cdot \nu) \, |u|^2\, d\sigma,
    \end{align}
    where $\nu$ is the outward pointing unit normal of $\partial D$ and $d\sigma$ is the induced surface measure on $\partial D$.
    \item Let $v_t$ be another smooth $1$-parameter family of closed, boundary-parallel $2$-forms solving $d ( \star v_t ) = \mu_t v_t$ satisfying $[\Phi_t^\ast ( \star v_t)] \in L$ for all $t$. Suppose $\mu_0 = \lambda_0 \neq 0$ and $\left( u_t,v_t\right)_{L^2(D_t)} = 0$ for all $t$. Then 
    \begin{align}
        0 = \int_{\partial D} (X \cdot \nu) \, (u \cdot v) \, d\sigma.
    \end{align}
\end{enumerate}
\end{theorem}
\begin{proof}
We will first prove the identities \eqref{identity 1}--\eqref{identity 3} below, which follow from $L^2$-normalization, Lagrangian boundary conditions and parallel boundary conditions, respectively. In these, $w_t$ is a $1$-parameter family of eigenforms which may stand for both $u_t$ and $v_t$. We write $\rho_t$ as a placeholder for $\lambda_t$ and $\mu_t$, so that $d ( \star w_t ) = \rho_t w_t$. Then
\begin{align}\label{identity 1}
    \int_{\partial D} (X\cdot \nu)\,( \star u \cdot \star w)\,d\sigma + \int_{D} \star \Dot{u}\wedge \bar w +\int_{D} u\wedge \star \Dot{\bar w} =0
\end{align}
\begin{align}\label{identity 2}
    \int_{\partial D}\star\dot{u}\wedge\star \bar w= - \lambda \int_{\partial D} (X\iprod u) \wedge \star \bar w
\end{align}
\begin{align}\label{identity 3}
    \int_{\partial D} (X \iprod u) \wedge \star \bar w=\int_{\partial D} (X\cdot \nu) (u \cdot w)\,d\sigma
\end{align}

Identity (\ref{identity 1}) follows from differentiating the expression
\begin{align*}
\langle u_t,w_t\rangle_{D_t} = \int_{ D_t}u_t\wedge \star \bar w_t = \int_{D_t} (\star u_t \cdot \star w_t) \ dV_{\mathbb R^3},
\end{align*}
which is constantly $1$ if $w_t = u_t$ or constantly $0$ if $w_t = v_t$. Thus,
\begin{align*}
    0 = \frac{d}{dt}\lvert_{t=0} \int_{ D_t} u_t \wedge \star \bar w_t &= \int_{\partial D} (X \cdot \nu) (\star u \cdot \star w) + \int_{D} \dot u \wedge \star \bar w + \int_D u \wedge \star \dot{\bar w} \\
    &= \int_{\partial D} (X \cdot \nu) (\star u \cdot \star w) + \int_{D} \star \dot u \wedge \bar w + \int_D u \wedge \star \dot{\bar w},
\end{align*}
noting that $\int_D \dot u \wedge \star {\bar w} = \int_D (\star \dot u, \star w) \, dV_{\mathbb R^3} = \int_D \star \dot u \wedge \bar w$.

We begin the proof of \eqref{identity 2} by differentiating 
\begin{align*}
    \frac{d}{dt}\lvert_{t=0}\Phi_t^\ast (\star u_t)= \star \dot{u} + X\iprod (d\hodge u) + d(X\iprod \star u)
\end{align*}
Since $[\star w]$ and $[\Phi^\ast_t (\star u_t)]$ both belong to the same Lagrangian $L \subseteq H^1_{dR}(\partial D; \mathbb C)$,
\begin{align*}
    0 = \int_{\partial D} \Phi_t^\ast (\star u_t) \wedge \star \bar w.
\end{align*}
Differentiating this identity at $t=0$ and applying Stokes' theorem on $\partial D$ yields
\begin{align*}
    0 &= \int_{\partial D} \star \dot u \wedge \star \bar w+ \lambda \int_{\partial D} (X \iprod u) \wedge \star \bar w  + \int_{\partial D} d ( X \iprod \star u) \wedge \star \bar w  \\
    &= \int_{\partial D} \star \dot u \wedge \star \bar w + \lambda \int_{\partial D} (X \iprod u) \wedge \star \bar w  - \int_{\partial D} (X \iprod \star u) \wedge (d \hodge \bar w)\\
    &= \int_{\partial D} \star \dot u \wedge \star \bar w+ \lambda \int_{\partial D} (X \iprod u) \wedge \star \bar w,
\end{align*}
where the last term in the second line vanished since $\iota_{\partial D}^\ast(d \hodge \bar w) = \rho \, \iota_{\partial D}^\ast \bar w = 0$.

For identity (\ref{identity 3}), let  $X^\perp = (X\cdot \nu)\, \nu$ and $X^T = X-X^\perp$. The graded product rule for the interior product yields
\begin{align*}
    (X \iprod u) \wedge \star \bar w&= 
    X \iprod (u \wedge \star \bar w) - u \wedge (X \iprod \star \bar w) \\
    &= X^\perp \iprod (u \wedge \star \bar w) + X^T \iprod (u \wedge \star \bar w) - u \wedge (X \iprod \star \bar w)
\end{align*}
When pulled back to $\partial D$, the terms $X^T \iprod ( u \wedge \star \bar w)$ and $u \wedge (X \iprod \star \bar w)$ both vanish -- the former because $X^T$ is parallel to $\partial D$ and the latter because $\iota_{\partial D}^\ast u = 0$. Hence,
\begin{align*}
    \int_{\partial D} (X \iprod u) \wedge \star \bar w & = \int_{\partial D} X^\perp \iprod (u \wedge \star \bar w) \\
    &= \int_{\partial D} (X\cdot \nu)\, ( u \cdot w) \, (\nu \iprod dV_{\mathbb R^3}) \\
    & = \int_{\partial D} (X\cdot \nu)\, ( u \cdot w)\,d\sigma
\end{align*}

With these identities in hand we compute
\begin{align*}
    \Dot{\lambda}&= \frac{d}{dt}\lvert_{t=0} \int_{ D_t}d(\star u_t)\wedge \star \bar u_t \\
    &= \lambda \int_{\partial  D}\big(X\cdot \nu\big) \, \lvert \star u\rvert^2 \, d\sigma + \int_{ D} d(\star \Dot{u})\wedge \star \bar u + \lambda \int_{ D} u\wedge \star \Dot{\bar u} \\
    &=\lambda \int_{\partial D}\big(X \cdot \nu\big) \, \lvert \star u\rvert^2 \,d\sigma + \int_{\partial  D} \star \Dot{u}\wedge \star \bar u + \lambda \int_D \star \dot u \wedge \bar u + \lambda \int_D u\wedge \star \Dot{\bar u}\\
    &\overset{(\ref{identity 1})}{=} \int_{\partial  D} \star \Dot{u} \wedge \star \bar u \overset{(\ref{identity 2})}{=} - \lambda \int_{\partial  D} (X \iprod u) \wedge \star\bar u \\
    &\overset{(\ref{identity 3})}{=} - \lambda\int_{ \partial D} (X\cdot \nu) \lvert u\rvert^2\,d\sigma,
\end{align*}
yielding our Hadamard-type rule, part (a). The third line resulted from an application of Stokes' theorem to $\int_{\partial D} \star \dot u \wedge \star \bar u$, keeping in mind that $d \hodge \bar u = \lambda \bar u$ since $\lambda$ is real.

For part (b) we essentially repeat the same computations, keeping in mind that $\mu_0=\lambda_0$.
\begin{align*}
    0&= \frac{d}{dt}\lvert_{t=0} \int_{ D_t}d(\star u_t)\wedge \star \bar v_t \\
    &= 
    \lambda \int_{\partial  D} \big(X\cdot \nu\big) \, (\star u \cdot \star v) \, d\sigma + \int_{ D} d(\star \Dot{u})\wedge \star \bar v + \lambda \int_{ D} u\wedge \star \Dot{\bar v} \\
    &= \lambda \int_{\partial  D} \big(X\cdot \nu\big) \, (\star u \cdot \star v) \, d\sigma + \int_{\partial  D} \star \Dot{u}\wedge \star \bar v + \mu \int_D \star \dot u\wedge \bar v + \lambda \int_D u\wedge \star \Dot{\bar v} \\
    &\overset{(\ref{identity 1})}{=} \int_{\partial  D} \star \Dot{u} \wedge \star \bar v \overset{(\ref{identity 2})}{=} - \lambda \int_{\partial  D} (X \iprod u) \wedge \star \bar v \\
    &\overset{(\ref{identity 3})}{=} - \lambda \int_{ \partial D} (X\cdot \nu) \, ( u \cdot v)\,d\sigma
\end{align*}
Note that no assumptions on the connectedness of $\partial D$ were made.
\end{proof}

Theorem \ref{thm:hadamard_rule} provides a way of computing the variation of a curl eigenvalue in the direction of arbitrary deformation velocities. Given a domain $D_{\Phi_0}$ and $f \in  C^\infty(\partial D_{\Phi_0})$, we can construct an an analytic one-parameter family of diffeomorphisms $\Phi_t$ extending $\Phi_0$ such that $\left\langle \frac{d}{dt}|_{t=0} \Phi_t, \nu \right\rangle = f$. This can be achieved by extending $f\nu$ to a vector field $X$ on $D_{\Phi_0}$ and setting $\Phi_t = \Phi_0 + t X \circ {\Phi_0}$, which yields a diffeomorphism onto its image for all $t$ small enough. Since $\Phi_t^\ast g_{\mathbb R^3}$ is then a one-parameter family of metrics which is analytic in $t$, \Cref{cor:Rellich} then gives the existence of an analytic one-parameter family of curl eigenforms as required in the statement of Theorem \ref{thm:hadamard_rule}.

\section{Generic simplicity of the spectrum}\label{section: generic simplicity of the spectrum}

The following basic lemma implies that, for a residual set in the space of embeddings of a smoothly bounded domain $D$ into $\mathbb R^3$, the multiplicity of each associated eigenvalue of $\curl_L$ is locally constant. This will be useful in proving Theorem \ref{theorem:curl_multiplicity}.

\begin{lemma}
\label{lemma:abstract_multiplicity}
    Let $\mathcal X$ be a topological space, and $(\lambda_k)_{k\in \mathbb Z}$ a sequence of continuous functions such that $\lambda_k \leq \lambda_{k'}$ for any $k \leq k'$, and such that there does not exist a point where infinitely many $\lambda_k$ take the same value.
    The set of all $x \in \mathcal X$ with the property that, for each $k \in \mathbb Z$, there exist integers $k_1 \leq k \leq k_2$ and a neighborhood $U$ of $x$ where
    \begin{align*}
        \lambda_{k_1-1} < \lambda_{k_1} = \ldots = \lambda_k = \ldots = \lambda_{k_2} < \lambda_{k_2+1},
    \end{align*}
    is a countable intersection of dense open sets.
\end{lemma}

\begin{proof}
    For each pair $k_1 \leq k_2$, denote $\mathcal K_{k_1,k_2} = (\lambda_{k_1}-\lambda_{k_2})^{-1}\{0\}$. This set is closed, since $\lambda_{k_1}-\lambda_{k_2}$ is continuous. Fix $k \in \mathbb Z$ and consider
    \begin{align*}
        \mathcal U_k := \bigcup_{k_1 \leq k \leq k_2} \left( \mathcal K_{k_1-1,k_1}^c \cap \mathcal K_{k_1,k_2}^o \cap \mathcal K_{k_2,k_2+1}^c \right),
    \end{align*}
    where $o$ and $c$ denote the interior and complement of a set, respectively. As a union of open sets, $\mathcal{U}_k$ is open. To show that it is also dense, let $x \in \mathcal X$ be an arbitrary point. Let $k_1$ and $k_2$ be the smallest and largest index, respectively, such that there exists a neighborhood of $x$ on which $\lambda_{k_1} = \lambda_{k}$ and $\lambda_k = \lambda_{k_2}$. Then 
    \begin{align*}
        x \in \overline{\mathcal K_{k_1-1,k_1}^c} \cap \mathcal K_{k_1,k_2}^o \cap \overline{\mathcal K_{k_2,k_2+1}^c} \subseteq \overline{\mathcal U_k},
    \end{align*}
    which shows $\mathcal U_k$ is dense, as claimed. The set of interest is the (countable) intersection of all $\mathcal U_k$, $k \in \mathbb Z$.
\end{proof}

We split the proof of \Cref{thm:simple_eigenvalues} into two parts. First, we use the Hadamard rule to derive strong consequences from the assumption that the spectrum of $\curl$ is not generically simple. We then show these consequences lead to a contradiction, hence proving \Cref{thm:simple_eigenvalues}. 

Recall that in the formulation of \Cref{thm:simple_eigenvalues}, we fixed a domain $D_0 \subseteq \mathbb R^3$ and a Lagrangian $L \subseteq H^1_{dR}(\partial D_0; \mathbb R)$. We then considered the Frech\'et manifold $\mathcal X(D_0)$ of smooth embeddings $\Phi: \overline{D_0} \to \mathbb R^3$, and associated to each $\Phi \in \mathcal X$ its image $D_\Phi := \Phi(D_0)$ and the Lagrangian $L_\Phi := (\Phi^{-1})^\ast L$. To formulate our intermediate result, we denote the nonzero eigenvalues of the curl operator $\curl_{L_{\Phi}}: H^1_{L_\Phi}(D_\Phi) \to L^2_\partial(D_\Phi)$, counted with multiplicity, by $\lambda^L_k(\Phi)$, where $k \in \mathbb Z\setminus\{0\}$.

\begin{proposition}
\label{theorem:curl_multiplicity}
Let $D_0 \subseteq \mathbb R^3$ and $L \subseteq H^1_{dR}(\partial D_0; \mathbb R)$ be as in \Cref{thm:simple_eigenvalues}. Then for a comeagre subset of diffeomorphisms $\Phi \in \mathcal X(D_0)$, the following holds:
\begin{enumerate}
    \item For each $k \in \mathbb Z \setminus\{0\}$, the multiplicity of $\lambda_k^L(\Phi')$ is constant for all $\Phi'$ in a neighborhood $\mathcal U_k \subseteq \mathcal X(D_0)$ of $\Phi$, and equals either one or two.
    \item In the latter case, the components of any orthonormal basis $(u_1,u_2) \in H^1_{\partial\partial}(D_{\Phi'})$ of the eigenspace corresponding to $\lambda_k^L(\Phi')$ are pointwise orthogonal and of equal length on $\partial D_{\Phi'}$ for any $\Phi' \in \mathcal U_k$. Furthermore, $\star u_1$ and $\star u_2$ pull back to harmonic forms on $\partial D_{\Phi'}$ with its induced metric.
\end{enumerate}
\end{proposition}

\begin{proof}
    Fix any $\Phi_0 \in \mathcal X(D_0)$ and choose a real number $\mu$ not in the spectrum of $\curl_{L_{\Phi_0}}$ on $D_{\Phi_0}$. Since the spectrum of $\curl_{L_\Phi}$ depends continuously on $\Phi$ by \Cref{cor:continuous_spectrum}, we may choose a neighborhood $\mathcal U \subseteq \mathcal X(D_0)$ of $\Phi_0$ small enough such that $\mu$ does not lie in the spectrum of $\curl_{L_{\Phi}}$ on $D_\Phi$ for all $\Phi \in \mathcal U$. This allows us to define the family of operators
    \begin{align*}
        R_\Phi&: L^2_\partial(D_0) \to L^2_\partial(D_0), \\
        R_\Phi \, u &= \Phi^\ast \circ (\mu-\curl_{L_\Phi})^{-1} \circ (\Phi^{-1})^\ast \, u.
    \end{align*}
    The operator $R_\Phi$ coincides with the resolvent of the curl operator 
    \begin{align*}
        d \circ \star_{\Phi^\ast g} : H^1_{L}(D_0,\Phi^\ast g) \to L^2_\partial(D_0).
    \end{align*}
    It is self-adjoint with respect to the inner product induced by $\Phi^\ast g$.

    Since the eigenvalues of $R_\Phi$, and thus also the eigenvalues of $\curl_{L_\Phi}$ on $D_\Phi$, depend continuously on $\Phi$ by \Cref{cor:continuous_spectrum}, we may apply \Cref{lemma:abstract_multiplicity} to conclude that there exists a comeagre subset of $\mathcal U$ where the multiplicity of each eigenvalue $\lambda_k^L(\Phi)$ is locally constant.

    Let $\Phi_1 \in \mathcal U$ be one such map on which the multiplicity of every eigenvalue $\lambda_k^L(\Phi)$ is locally constant. Fix $k \in \mathbb Z \setminus \{0\}$ such that $\lambda_{k-1}^L(\Phi_1) < \lambda_{k}^L(\Phi)$ and let $\mathcal U_1$ be a neighborhood of $\Phi_1$ on which
    \begin{align*}
        \lambda_{k-1}^L(\Phi) < \lambda_{k}^L(\Phi) = \ldots = \lambda_{k+m}^L(\Phi) < \lambda_{k+m+1}^L(\Phi)
    \end{align*}
    for some $m \in \mathbb N$ and all $\Phi \in \mathcal U_1$.
    Consider any eigenfunction $u \in L^2_\partial(D_0)$ of $R_{\Phi_1}$ corresponding to the curl eigenvalue $\lambda = \lambda_k^L(\Phi_1)$. Choose a small circle $C \subseteq \mathbb C$ encircling $(\mu - \lambda)^{-1}$, but no other eigenvalue of $R_{\Phi_1}$. After possibly shrinking $\mathcal U_1$, the Riesz projection
    \begin{align*}
        P_\Phi = \frac{1}{2\pi i} \int_{C} (\zeta - R_\Phi)^{-1}
    \end{align*}
    is defined and depends continuously on $\Phi \in \mathcal U_1$, see the proof of \Cref{cor:continuous_spectrum}. Since the multiplicity of $\lambda_k^L$ is assumed to be constant on $\mathcal U_1$, the range of $P_\Phi$ consists of eigenfunctions of $R_\Phi$ for all $\Phi \in \mathcal U_1$. We may construct a family of eigenfunctions $u_\Phi \in L^2_\partial(D_0)$ satisfying
    \begin{align*}
        R_{\Phi} u_\Phi &= (\mu - \lambda_k^L(\Phi))^{-1} u_\Phi, & \|u\|_{L^2(D_0,\Phi^\ast g)} &= 1, & u_{\Phi_1} &= u,
    \end{align*}
    by simply setting $u_\Phi = \|P_\Phi u\|_{L^2(D_0,\Phi^\ast g)}^{-1} P_\Phi u$, after possibly shrinking $\mathcal U_1$ to ensure $P_\Phi u \neq 0$.

    Write $D := D_{\Phi_1}$. Given $f \in  C^\infty(\partial D)$, we consider an analytic $1$-parameter family of diffeomorphisms $\Phi(t)$ in $\mathcal U_1$ with $\Phi(0) = \Phi_1$ and $\left\langle \frac{d}{dt}|_{t=0} \Phi, \nu \right\rangle = f$, giving rise to the analytic $1$-parameter family of metrics $\Phi(t)^\ast g_{\mathbb R^3}$ on $D_0$. \Cref{lem:analytic_resolvent} then implies that $P_{\Phi(t)}$ depends analytically on $t$, and \Cref{thm:hadamard_rule} yields
    \begin{align*}
        \frac{d}{dt}\vert_{t=0} \, \lambda_k^L(\Phi(t)) = - \lambda \int_{\partial D} f|u|^2\,d\sigma.
    \end{align*}
    For simplicity, denote $\frac{d}{dt}\vert_{t=0}\lambda_k^L(\Phi(t))$ by $\dot\lambda_f$. By polarization, we find that
    \begin{align*}
        \int_{\partial D} f\left( u_j \cdot u_k \right) \,d\sigma = - \frac{\dot\lambda_f}{\lambda} \, \delta_{j,k}
    \end{align*}
    for any real valued orthonormal basis $(u_j)_{j=1}^m$ of $E_\lambda$ (note that such a basis always exists because $\curl_L$ commutes with complex conjugation). Indeed, 
    \begin{align*}
        \int_{\partial D} f \left( u_j \cdot u_k \right) \,d\sigma &= \frac12 \left( \int_{\partial D} f\left|\tfrac{u_1+u_2}{\sqrt2}\right|^2\,d\sigma + \int_{\partial D} f\left|\tfrac{u_1-u_2}{\sqrt2}\right|^2\,d\sigma - \int_{\partial D} f|u_1|^2\,d\sigma - \int_{\partial D} f|u_2|^2\,d\sigma \right) \\
        &=  - \frac12 \left( \frac{\dot\lambda_f}{\lambda} + \frac{\dot\lambda_f}{\lambda} - \frac{\dot\lambda_f}{\lambda} - \frac{\dot\lambda_f}{\lambda} \right) = 0
    \end{align*}
and
    \begin{align*}
        \int_{\partial D} f|u_1|^2\,d\sigma &= -\frac{\dot\lambda_f}{\lambda} = \int_{\partial D} f|u_2|^2\,d\sigma.
    \end{align*}
    Since these identities hold for any $f\in C^\infty (\partial D)$, the matrix $\left( u_i \cdot u_j \right)_{i,j = 1}^m$ is a multiple of the identity at every point $p \in \partial D$. This conclusion in fact holds for any $\Phi \in \mathcal U_1$, since the preceding argument only used local constancy of the eigenvalue multiplicity and can therefore be repeated at any $\Phi \in \mathcal U_1$.

    There does not exist a curl eigenform $u$ which vanishes identically on $\partial D$ (cf.\ \cite[proof of Proposition 2.1]{EncisoPeralta2020}). Consequently, there exists an open subset of $\partial D$ on which all $u_i$ are nonvanishing and pointwise orthogonal to one another. As the $u_i$ are boundary parallel, it follows that $m \leq 2$. The eigenspace $E_\lambda$ is thus either one-dimensional or spanned by two eigenforms $u_1$ and $u_2$ which are orthogonal and of equal length at each point $p \in \partial D$. 

    The assumption that the Lagrangian $L$ is \emph{real}, which was not used until now, helps to deal with the latter case. If $L$ is real, the operator $\curl_L$ commutes with complex conjugation, hence all eigenforms in this proof can be taken to be real valued. This allows us to show $\star u_1$ and $\star u_2$ are in fact harmonic with respect to the induced metric on $\partial D$. Write $\alpha = \iota_{\partial D}^\ast \hodge u_1$, $\beta = \iota_{\partial D}^\ast \hodge u_2$, and let $\star'$ denote the Hodge-star on $\partial D$ with its induced metric. Consider a connected open set $V \subseteq \partial D$ where $|\alpha| \neq 0$. Since $\beta$ and $\alpha$ are orthogonal and of equal length, $\beta = \pm \star' \alpha$ on $V$, with the sign constant throughout $V$ by continuity. Without loss of generality, assume that $\beta = \star' \alpha$. Part of the boundary condition $d \star u_1 \in L^2_{\partial}(D)$ is that $\iota_{\partial D}^\ast(d \star u_1) = 0$, and hence $d\alpha = 0$. Analogously, $d\beta = 0$. Together, we have $d \beta = d \star' \! \beta = 0$, i.e.\ $\beta$ is harmonic on $V$ with its induced metric, and analogously $\alpha$ as well. By continuity, $\alpha$ and $\beta$ are harmonic on $\overline V$. Repeating this argument on all connected components of the set $\{p \in \partial D: |u_1| \neq 0\}$ shows that $\alpha$ and $\beta$ are harmonic on its closure. In the interior of its complement, $\alpha$ and $\beta$ vanish identically. Hence, both are in fact harmonic on all of $\partial D$. It now follows that on each connected component $M$ of $\partial D$, either $\beta = \star' \alpha$ or $\beta = -\star' \alpha$ holds. This is because at least one of the harmonic forms $\beta - \star'\alpha$ or $\beta + \star'\alpha$ vanishes on an open subset of $M$ and therefore on all of $M$ by the unique continuation theorem for harmonic differential forms \cite[Theorem 1]{Aronszajn1962}.
\end{proof}

We thank Daniel Peralta-Salas for the important observation that boundary parallel curl eigenforms which are orthogonal and of equal length on the boundary must restrict to harmonic forms on the boundary with respect to its induced metric. The proof of Theorem \ref{thm:simple_eigenvalues} in the case of disconnected boundary rests on this observation.

\begin{proof}[Proof of \Cref{thm:simple_eigenvalues}]
According to Proposition \ref{theorem:curl_multiplicity}, we are presented with a dichotomy: Either the spectrum of $\curl_L$ is simple for generic domains diffeomorphic to $D_0$ or there exists an integer $k$, an embedding $\Phi_1:D_0\to \mathbb R^3$, an open subset $\mathcal{U}_k$ in the space of smooth maps from $D_0$ to $\mathbb R^3$ containing $\Phi_1$ and a family of \emph{real-valued} orthonormal bases $u_1 (\Phi)$ and $u_2 (\Phi)$ of the eigenspace associated to $\lambda_k^L(\Phi)$ such that the $1$-forms $\star u_i (\Phi)$ restrict to harmonic forms on $\partial D_\Phi$ with respect to the induced metric for all $\Phi \in \mathcal{U}_k$ and satisfy $\star u_2=\star^\prime (\star u_1)$, where $\star^\prime$ is the Hodge-star on $\partial D$. We will show that the second alternative is impossible. For this we distinguish two cases, depending on whether $\partial D_0$ is connected.

    \textbf{Case 1: $\partial D$ is connected.} Here, we use the self-adjointness of $\curl_L$ to obtain
        \begin{align*}
            0 &= \left| \left\langle\curl u_1, u_2 \right\rangle - \left\langle u_1, \curl  u_2 \right\rangle \right| \\
            &= \left| \int_{\partial D} \star u_1 \wedge \star u_2 \right| \\
            &= \left| \int_{\partial D} \star u_1 \wedge \star' (\star u_1) \right|
           \\
           &= \int_{\partial D} |\hodge u_1|^2 d\sigma,
        \end{align*}
        which implies that $u_1$ vanishes identically on $\partial D$. As already mentioned, this possibility is ruled out by an argument presented in \cite[Proof of Proposition 2.3]{EncisoPeralta2020}. If $\partial D$ is connected, it is therefore impossible for two real valued curl eigenforms $u_1$ and $u_2$ of the same eigenvalue to be orthogonal and of equal length on the boundary.
        
        \textbf{Case 2: $\partial D$ is not connected.} Here, the argument above fails, since it is possible that $\star u_1 = \star' (\star u_2)$ on one boundary component and $\star u_1 = -\!\star'\! (\star u_2)$ on another. Instead, we will make an argument that plays the finite-dimensionality of the space of harmonic forms on a fixed boundary component of $\partial D$ against the infinite-dimensionality of the space of perturbations of the remaining boundary components via the Cauchy--Kovalevskaya theorem for curl eigenforms \cite[Theorem 3.1]{EncisoPeralta2012}. We will make this precise in the next paragraph.
        
        Fix a boundary component $M \subseteq \partial D$ and let $(\Phi_t)_{t \in \mathbb [-1,1]^{k}}$ be a real-analytic $m$-parameter family of diffeomorphisms in $\mathcal U_1$ ($m$ to be fixed later) such that $M \subseteq \partial D_{\Phi_t}$ for all $t \in [-1,1]^m$ and $\partial D_{\Phi_{t_1}} \cap \partial D_{\Phi_{t_2}}$ contains no open subset of $\partial D_{\Phi_{t_1}}$ but $M$ if $t_1 \neq t_2$. Since the multiplicity of $\lambda(t) := \lambda_k(D_{\Phi_t})$ is constantly equal to two, by the same argument as in the proof of \Cref{theorem:curl_multiplicity} we obtain analytic $m$-parameter families of real-valued eigenforms $u_1(t)$ and $u_2(t)$ such that
        \begin{align*}
            &\curl u_j(t) = \lambda(t) u_j(t), \\
            &\left\langle u_i(t),u_j (t)\right\rangle_{L^2(D_t)} = \delta_{i,j}
        \end{align*}
        for $i,j \in \{1,2\}$. Furthermore, by \Cref{theorem:curl_multiplicity}, $\iota_M^\ast (\star u_1(t))$ and $\iota_M^\ast (\star u_2(t))$ are pointwise orthogonal harmonic $1$-forms on $M$ for all $t \in [-1,1]^m$. Let $\ell'$ denote the genus of $M$. The map $F:[-1,1]^m \to \mathbb R^{1+2\ell'}$ given by
        \begin{align*}
            F(t) = \left(\lambda(t),[\iota_M^\ast (\star u_1(t))]_{H^1_{dR}(M)}\right)
        \end{align*}
        is real analytic. If $m = 3 + 2\ell'$, by Sard's theorem, a generic level set of $F$ is a submanifold of codimension $1 + 2\ell'$, and hence two-dimensional. Let us henceforth restrict $t$ to such a level set $\mathcal S$. Then $[\iota_M^\ast (\star u_1(t))]_{H^1_{dR}(M)}$ is constant, and since $\iota_M^\ast (\star u_1(t))$ is harmonic, this means $\iota_M^\ast (\star u_1(t))$ is constant. As $\lambda(t)$ is constant as well, the uniqueness part of the Cauchy-Kovalevskaya theorem for curl eigenforms obtained in \cite[Theorem 3.1]{EncisoPeralta2012} implies that $\star u_1(t_1) = \star u_1(t_2)$ on $D_{t_1} \cap D_{t_2}$ for all $t_1, t_2 \in \mathcal S$. On the domain $D_{\mathcal S} = \bigcup_{t \in \mathcal S} D_t$, we can thus find $u_1^{\mathcal S}$ such that $\star u_1^{\mathcal S} |_{x} = \star u_1(t) |_{x}$ whenever $x \in D_t$. Define $u_2^{\mathcal S}$ analogously. The parallel boundary conditions imply that the vector proxies $(\star u_1^{\mathcal S})^\sharp$ and $(\star u_2^{\mathcal S})^\sharp$ are tangential to $\partial D_t$ for all $t \in \mathcal S$. Equivalently, $\partial D_t$ integrates the (possibly singular) plane field $\ker \star(\star u_1^{\mathcal S} \wedge \star u_2^{\mathcal S})$. As a consequence, $\partial D_{t_1} \cap \partial D_{t_2} \setminus M$ contains an open subset of $\partial D_{t_1}$ if $\partial D_{t_1} \cap \partial D_{t_2} \setminus M$ contains a single point $p \in D_{\mathcal S}$ where $u_1^{\mathcal S}|_{p} \neq 0$. The vanishing set of $u_1^{\mathcal S}$ is at most two-dimensional and so cannot contain $\partial D_t$ for all $t \in \mathcal S$. Thus, there exists an open set $O \subseteq D_{\mathcal S}$ where $u_1^{\mathcal S}$ does not vanish, and a nonempty open set $\mathcal S' \subseteq \mathcal S$ such that $\partial D_t$ intersects $O$ nontrivially for all $t \in \mathcal S'$. Since $\partial D_t$ and $\mathcal S'$ are two-dimensional, and $O$ is only three-dimensional, there must exist $t_1,t_2 \in \mathcal S'$ such that $\partial D_{t_1} \cap \partial D_{t_2} \cap O \neq \emptyset$. Then, $\partial D_{t_1} \cap \partial D_{t_2}$ contains a nontrivial open subset of $\partial D_1$ other than $M$, contradicting our assumption on the family $D_t$, and hence finishing our argument.
\end{proof}

\begin{remark}
    The proof of \Cref{thm:simple_eigenvalues} in the case of connected boundary applies verbatim to any family of domains large enough so that the space of ``deformation velocities'' $\left\langle X,\nu \right\rangle$ spans a dense subspace of $L^2(\partial D)$. If $\partial D$ is disconnected, we additionally need to be able to to perturb $\partial D$ while keeping one component fixed. Both of these properties are satisfied, for example, if one restricts attention to domains with analytic boundary.
\end{remark}

\section{Local extrema of the eigenvalue functionals}\label{section: critical points of the eigenvalue functionals}

This section is concerned with proving \Cref{thm:extremal_eigenvalues}, i.e.\ with the characterization of local extrema for the $k^{th}$ eigenvalue functionals $\lambda_k^L$ over the space $\mathcal X_1(D_0)$ of domains diffeomorphic to $D_0$ with volume $1$.

Local extrema of $\lambda_k^L$ often arise when an eigenvalue cluster has the property that along any volume-preserving perturbation, the lowest eigenvalue splitting off the cluster is non-increasing, while the highest eigenvalue is non-decreasing. This picture provides the intuition behind the first part of \Cref{thm:extremal_eigenvalues}, which parallels a theorem obtained by El Soufi and Ilias \cite{ElSoufiIllias2008} for the Laplace-Beltrami operator acting on functions on a closed Riemannian manifold, and is proven using the same general strategy.

\begin{proof}[Proof of \Cref{thm:extremal_eigenvalues}]
    Suppose $\Phi_0$ is a local extremum of $\lambda_k^L$ in $\mathcal X_1(D_0)$. Equivalently, $\Phi$ is a local extremum for dimensionless functional $|D_\Phi|^{\frac13} \lambda^L_k(\Phi)$ over the entire space $\mathcal X(D_0)$. For the rest of the proof, we write $D = D_{\Phi_0}$ and recenter $\mathcal X(D_0)$ at $D$ via the isomorphism $\mathcal X(D_0) \to \mathcal X(D)$, $\Phi \mapsto \Phi \circ \Phi_0^{-1}$.
    
    Assume that $\lambda := \lambda_k^L(\mathrm{id})$ has multiplicity $m$. 
    Consider any function $f \in C^\infty(\partial D)$. As in the proof of \Cref{thm:simple_eigenvalues}, let $\Phi_t$ be an analytic $1$-parameter family of diffeomorphisms defined on a neighborhood of $D$, such that $\Phi_0 = \mathrm{id}$ and $\left\langle X, \nu \right\rangle = f$, where $X$ denotes the vector field $\frac{d}{dt}\rvert_{t=0} \Phi_t$ and $\nu$ the outward unit normal to $\partial D$. \Cref{cor:Rellich} yields $\Lambda_1(t),\ldots,\Lambda_m(t)$ and $u_1(t),\ldots,u_m(t) \in H^1_{L_{\Phi_t}}(D_{\Phi_t})$ such that $(u_j(0))_{j=1}^m$ is an orthonormal basis of the eigenspace $E_\lambda$ and
    \begin{align*}
        \curl u_j(t) = \Lambda_j(t) u_j(t).
    \end{align*}
    \Cref{thm:hadamard_rule} then implies that 
    \begin{align*}
        \Lambda_j'(0) = - \lambda \int_{\partial D} f |u_j|^2 \text{ and } 0 = \int_{\partial D} f (u_j \cdot u_k).
    \end{align*}
    Suppose first that the (finite dimensional) space spanned by $\{|u|^2|_{\partial D}: u \in E_\lambda\}$ does not contain the constant function $1$. Then there exists a function $f \in C^\infty(\partial D)$ with $\int_{\partial D} f |u|^2 d\sigma = 0$ for all $u \in E_\lambda$ but $\int_{\partial D} f \, d\sigma = 1$. Construct an analytic $1$-parameter family $\Phi_t$ as above which satisfies $\left\langle X, \nu \right\rangle = f$. Then there exist $u_1,\ldots,u_m \in E_\lambda$ with
    \begin{align}
        \label{eq:derivative_of_normalized_eigenvalue}
        \begin{split}
        \frac{d}{dt}\big\rvert_{t=0} \, \left( |D_t|^{\frac{1}{3}} \Lambda_j(t) \right) &= \Lambda_j'(0)|D_t|^{\frac{1}{3}} + \frac13 \Lambda_j(0)|D_t|^{-\frac{2}{3}} \frac{d}{dt} |D_t| \\
        &= - \lambda |D|^{\frac13} \int_{\partial D} f |u_j|^2 d\sigma + \frac13 \lambda |D|^{-\frac{2}{3}} \int_{\partial D} f \, d\sigma    
        \end{split}
    \end{align}
    for all $j \in \{1,\ldots,m\}$.
    Thus, all normalized eigenvalues $|D_t|^{\frac{1}{3}} \Lambda_j(t)$ are greater in absolute value than $\lambda$ for some $t > 0$, and smaller in absolute value for some $t < 0$, contradicting the assumption that $D$ was a local extremum for $\lambda_k^L$.

    If $\mathrm{span}_{\mathbb R}(\{|u|^2|_{\partial D}: u \in E_\lambda\})$ contains $1$ but the convex cone spanned by $\{|u|^2|_{\partial D}: u \in E_\lambda\}$ does not, we may choose $f \in \mathrm{span}_{\mathbb R}(\{|u|^2|_{\partial D}: u \in E_\lambda\})$ with $\int_{\partial D} f |u|^2 d\sigma \leq 0$ for all $u \in E_\lambda$, but $\int_{\partial D} f \, d\sigma > 0$, and arrive at the same contradiction as in the previous paragraph.

    Hence, $1$ lies in the cone spanned by $\{|u|^2|_{\partial D}: u \in E_\lambda\}$. An elementary linear algebra calculation, which we isolate in \Cref{lem:abstract_cone} below, shows that there exists an orthonormal basis $(u_j)_{j=1}^m$ of $E_\lambda$ and $\tau_1,\ldots,\tau_m \geq 0$ such that $|\tau_1 u_1|^2 + \ldots + |\tau_m u_m|^2 = 1$ on $\partial D$. After discarding $\tau_j u_j$ whenever $\tau_j = 0$, we have found an orthogonal set of eigenforms with the claimed property.

    It remains to consider the special case of the lowest positive eigenvalue. (The case of the highest negative eigenvalue is analogous). Here, we must have $\frac{d}{dt}\big\rvert_{t=0} \, \left( |D_t|^{\frac{1}{3}} \Lambda_j(t) \right) = 0$ for all $j \in \{1,\ldots,m\}$ along any $1$-paramater family $\phi_t$, since otherwise we could further decrease the lowest eigenvalue of the eigenvalue cluster $\Lambda_1,\ldots,\Lambda_m$, which by definition decreases $\lambda_1^L$. Hence,
    \begin{align*}
        \int_{\partial D} f|u_j|^2 d\sigma = \frac13 |D|^{-1} \int_{\partial D} f \, d\sigma 
    \end{align*}
    for all $f \in C^\infty(\partial D)$. Part (b) of \Cref{thm:hadamard_rule} furthermore asserts that
    \begin{align*}
        \int_{\partial D} f(u_j \cdot u_k) d\sigma = 0
    \end{align*}
    for all $j \neq k$. It follows that
    \begin{align*}
        \int_{\partial D} f|u|^2 d\sigma = \frac13 |D|^{-1} \int_{\partial D} f \, d\sigma 
    \end{align*}
    for any $L^2$-normalized $u \in E_\lambda$. Since $f \in C^\infty(\partial D)$ was arbitrary, $|u|^2 = \frac{1}{3} |D|^{-1}$ on $\partial D$.

    Having established this, one may proceed as in the proof of \cite[Proposition 3.6]{Gerner2023} to show that $\dim E_\lambda = 1$. Let us note that the key idea there is to show by direct calculation that $\dim E_\lambda \geq 2$ would imply the induced metric on $\partial D$ is flat, which is not possible for a smoothly bounded domain $D \subseteq \mathbb R^3$.
\end{proof}

\begin{lemma}
    \label{lem:abstract_cone}
    Let $B: V \times V \to W$ be a symmetric bilinear map between a complex, finite dimensional inner product space $V$ and another complex vector space $W$.
    For any element $c$ in the cone spanned by $\left\{B(v,\bar v), v \in \mathbb R^{m} \right\}$, there exists an orthonormal basis $(v_j)_{j=1}^m$ of $V$ and coefficients $\tau_1,\ldots,\tau_m \geq 0$ such that $c = \sum_{j=1}^m B(\tau_j v_j,\tau_j \bar v_j)$.
\end{lemma}

\begin{proof}
    Since $c \in \mathcal C$, there exist $\eta_1,\ldots,\eta_N \geq 0$ and $u_1,\ldots,u_N \in V$ such that
    \begin{align*}
        c = \sum_{k=1}^N \eta_k B(u_k,\bar u_k).
    \end{align*}
    Pick an orthonormal basis $(w_j)_{j=1}^m$ of $V$. Write $u_k = \sum_{j=1}^m \nu_k^j w_j$. Then 
    \begin{align*}
        c = \sum_{k=1}^N \eta_k \sum_{i,j=1}^m \nu_k^i\bar \nu_k^j B(w_i,\bar w_j) = \sum_{i,j=1}^m \left( \sum_{k=1}^N \eta_k \nu_k^i \bar \nu_k^j \right) B(w_i,\bar w_j).
    \end{align*}
    Denote $\alpha_{ij} = \eta_k \nu_k^i\bar \nu_k^j$. Then $\alpha$ is a hermitian, positive semidefinite $m \times m$ matrix. Thus, there exists a unitary matrix $\beta$ and a diagonal matrix $\tau = \mathrm{diag}(\tau_1,\ldots,\tau_m)$ with nonnegative entries such that $\beta \tau \beta^\ast = \alpha$. In other words,
    \begin{align*}
        \sum_{k=1}^m \tau_k \beta_{i,k} \overline{\beta_{j,k}} = \alpha_{i,j}.
    \end{align*}
    Setting $v_k = \sum_{j=1}^m \beta_{j,k} w_j$ for $k = 1,\ldots,m$ yields
    \begin{align*}
        &\sum_{k=1}^m \tau_k B(v_k,\bar v_k) = \sum_{k=1}^m \tau_k \sum_{i,j = 1}^m \beta_{i,k} \overline{\beta_{j,k}} B(w_i,\bar w_j) \\ = \ & \sum_{i,j = 1}^m \left( \sum_{k=1}^m \tau_k \beta_{i,k} \overline{\beta_{j,k}} \right) B(w_i,\bar w_j) = 
        \sum_{i,j = 1}^m \alpha_{i,j} B(w_i,\bar w_j) = c,
    \end{align*}
    and thus $c = \sum_{k=1}^m B(\sqrt{\tau_k} v_k,\sqrt{\tau_k} \bar v_k)$. Normalizing $(v_k)_{k=1}^m$ yields the result.
\end{proof}

\newpage

\bibliographystyle{amsplain0}
\bibliography{main.bib}

\providecommand{\bysame}{\leavevmode\hbox to3em{\hrulefill}\thinspace}
\providecommand{\MR}{\relax\ifhmode\unskip\space\fi MR }
\providecommand{\MRhref}[2]{%
  \href{http://www.ams.org/mathscinet-getitem?mr=#1}{#2}
}
\providecommand{\href}[2]{#2}
\begin{thebibliography}{10}

\bibitem{Aronszajn1962}
N.~Aronszajn, A.~Krzywicki, and J.~Szarski, \emph{A unique continuation theorem for exterior differential forms on {R}iemannian manifolds}, Ark. Mat. \textbf{4} (1962), 417--453.

\bibitem{Cantarella1999}
Jason Cantarella, \emph{Topological structure of stable plasma flows}, ProQuest LLC, Ann Arbor, MI, 1999, Thesis (Ph.D.)--University of Pennsylvania.

\bibitem{CantarellaDeTurckGluck2002topology}
Jason Cantarella, Dennis DeTurck, and Herman Gluck, \emph{Vector calculus and the topology of domains in 3-space}, Amer. Math. Monthly \textbf{109} (2002), 409--442.

\bibitem{CantarellaDeTurckGluckTeytelA}
Jason Cantarella, Dennis DeTurck, Herman Gluck, and Mikhail Teytel, \emph{Isoperimetric problems for the helicity of vector fields and the {B}iot-{S}avart and curl operators}, J. Math. Phys. \textbf{41} (2000), 5615--5641.

\bibitem{CantarellaDeTurckGluckTeytel2000B}
Jason Cantarella, Dennis DeTurck, Herman Gluck, and Mikhail Teytel, \emph{The spectrum of the curl operator on spherically symmetric domains}, Phys. Plasmas \textbf{7} (2000), 2766--2775.

\bibitem{Cardona2021}
Robert Cardona, Eva Miranda, Daniel Peralta-Salas, and Francisco Presas, \emph{Constructing {T}uring complete {E}uler flows in dimension 3}, Proc. Natl. Acad. Sci. USA \textbf{118} (2021), Paper No. e2026818118, 9.

\bibitem{ElSoufiIllias2008}
Ahmad El~Soufi and Sa\"id Ilias, \emph{Laplacian eigenvalue functionals and metric deformations on compact manifolds}, J. Geom. Phys. \textbf{58} (2008), 89--104.

\bibitem{EncisoGarciaPeralta2018}
Alberto Enciso, M.~\'Angeles Garc\'ia-Ferrero, and Daniel Peralta-Salas, \emph{The {B}iot-{S}avart operator of a bounded domain}, J. Math. Pures Appl. (9) \textbf{119} (2018), 85--113.

\bibitem{EncisoGernerPeralta2024}
Alberto Enciso, Wadim Gerner, and Daniel Peralta-Salas, \emph{Optimal convex domains for the first curl eigenvalue in dimension three}, Trans. Amer. Math. Soc. \textbf{377} (2024), 4519--4540.

\bibitem{EncisoPeralta2012}
Alberto Enciso and Daniel Peralta-Salas, \emph{Knots and links in steady solutions of the {E}uler equation}, Ann. of Math. (2) \textbf{175} (2012), 345--367.

\bibitem{EncisoPeralta2020}
Alberto Enciso and Daniel Peralta-Salas, \emph{Non-existence of axisymmetric optimal domains with smooth boundary for the first curl eigenvalue}, Ann. Sc. Norm. Super. Pisa Cl. Sci. (5) \textbf{24} (2023), 311--327.

\bibitem{Etnyre2000I}
John Etnyre and Robert Ghrist, \emph{Contact topology and hydrodynamics: I. beltrami fields and the seifert conjecture}, Nonlinearity \textbf{13} (2000), 441–458.

\bibitem{Etnyre2000III}
John Etnyre and Robert Ghrist, \emph{Contact topology and hydrodynamics iii: knotted orbits}, Transactions of the American Mathematical Society \textbf{352} (2000), 5781–5794.

\bibitem{Etnyre2012}
John~B. Etnyre, Rafal Komendarczyk, and Patrick Massot, \emph{Tightness in contact metric 3-manifolds}, Inventiones Mathematicae \textbf{188} (2012), 621–657.

\bibitem{EverittMarkus1999}
W.~N. Everitt and L.~Markus, \emph{Complex symplectic geometry with applications to ordinary differential operators}, Trans. Amer. Math. Soc. \textbf{351} (1999), 4905--4945.

\bibitem{Gerner2023}
Wadim Gerner, \emph{Isoperimetric problem for the first curl eigenvalue}, J. Math. Anal. Appl. \textbf{519} (2023), Paper No. 126808, 19.

\bibitem{GreilhuberKepplinger2023}
Josef Greilhuber and Willi Kepplinger, \emph{On {A}rnold's transversality conjecture for the {L}aplace-{B}eltrami operator}, arxiv (2023), available at https://arxiv.org/abs/2312.16939.

\bibitem{HiptmairKotiugaTordeux2012}
Ralf Hiptmair, Peter~Robert Kotiuga, and S\'ebastien Tordeux, \emph{Self-adjoint curl operators}, Ann. Mat. Pura Appl. (4) \textbf{191} (2012), 431--457.

\bibitem{Kato}
Tosio Kato, \emph{Perturbation theory for linear operators}, Die Grundlehren der mathematischen Wissenschaften, vol. Band 132, Springer-Verlag New York, Inc., New York, 1966.

\bibitem{Kotiuga1987}
P.~R. Kotiuga, \emph{{On making cuts for magnetic scalar potentials in multiply connected regions}}, Journal of Applied Physics \textbf{61} (1987), 3916--3918.

\bibitem{LambertiPauliZaccaron}
Pier~Domenico Lamberti, Dirk Pauly, and Michele Zaccaron, \emph{Shape derivatives of the eigenvalues of the de rham complex for lipschitz deformations and variable coefficients: Part i}, 2025.

\bibitem{LambertiZaccaron}
Pier~Domenico Lamberti and Michele Zaccaron, \emph{Shape sensitivity analysis for electromagnetic cavities}, Math. Methods Appl. Sci. \textbf{44} (2021), 10477--10500.

\bibitem{LaurenceAvellaneda1991}
Peter Laurence and Marco Avellaneda, \emph{On {W}oltjer's variational principle for force-free fields}, J. Math. Phys. \textbf{32} (1991), 1240--1253.

\bibitem{Martelli2016AnIT}
Bruno Martelli, \emph{An {I}ntroduction to {G}eometric {T}opology}, 2016, available at https://arxiv.org/abs/1610.02592.

\bibitem{RellichI}
Franz Rellich, \emph{St\"orungstheorie der {S}pektralzerlegung}, Math. Ann. \textbf{113} (1937), 600--619.

\bibitem{Schwarz1995}
G\"unter Schwarz, \emph{Hodge decomposition---a method for solving boundary value problems}, Lecture Notes in Mathematics, vol. 1607, Springer-Verlag, Berlin, 1995.

\bibitem{StessinYangZhu2011}
M.~Stessin, R.~Yang, and K.~Zhu, \emph{Analyticity of a joint spectrum and a multivariable analytic {F}redhom theorem}, New York J. Math. \textbf{17A} (2011), 39--44.

\bibitem{Warner1983}
Frank~W. Warner, \emph{Foundations of differentiable manifolds and {L}ie groups}, Graduate Texts in Mathematics, vol.~94, Springer-Verlag, New York-Berlin, 1983, Corrected reprint of the 1971 edition.

\bibitem{Woltjer1958}
L.~Woltjer, \emph{A theorem on force-free magnetic fields}, Proceedings of the National Academy of Sciences \textbf{44} (1958), 489--491.

\bibitem{GigaYoshida1990}
Zensho Yoshida and Yoshikazu Giga, \emph{Remarks on spectra of operator rot}, Math. Z. \textbf{204} (1990), 235--245.

\end{thebibliography}

\end{document}